\newcommand{\kom}[1]{}
\renewcommand{\kom}[1]{{\bf [#1]}}
\def\1{\raisebox{2pt}{\rm{$\chi$}}}
\newtheorem{theorem}{Theorem}[section]
\newtheorem{lemma}[theorem]{Lemma}
\newtheorem{proposition}[theorem]{Proposition}
\newtheorem{definition}[theorem]{Definition}
\newtheorem{remark}[theorem]{Remark}
\newcommand{\R}{{\mathbb R}}
\newcommand{\eps}{{\varepsilon}}
\def\1{\raisebox{2pt}{\rm{$\chi$}}}
\let\originalleft\left
\let\originalright\right
\renewcommand{\left}{\mathopen{}\mathclose\bgroup\originalleft}
\renewcommand{\right}{\aftergroup\egroup\originalright}
\newcommand{\abs}[1]{\left|#1\right|}
\newcommand{\Rn}{\mathbb{R}^n}
\newcommand{\aveint}[2]{\mathchoice%
	{\mathop{\kern 0.2em\vrule width 0.6em height 0.69678ex depth -0.58065ex
			\kern -0.8em \intop}\nolimits_{\kern -0.45em#1}^{#2}}%
	{\mathop{\kern 0.1em\vrule width 0.5em height 0.69678ex depth -0.60387ex
			\kern -0.6em \intop}\nolimits_{#1}^{#2}}%
	{\mathop{\kern 0.1em\vrule width 0.5em height 0.69678ex depth -0.60387ex
			\kern -0.6em \intop}\nolimits_{#1}^{#2}}%
	{\mathop{\kern 0.1em\vrule width 0.5em height 0.69678ex depth -0.60387ex
			\kern -0.6em \intop}\nolimits_{#1}^{#2}}}
\newcommand{\Om}{\Omega}
\newcommand{\vp}{\varphi}
\renewcommand{\div}{\operatorname{div}}
\newtheoremstyle{case}{3mm}{-1,5mm}{}{}{}{:}{ }{}
\theoremstyle{case}
\newcommand{\numberthis}{\addtocounter{equation}{1}\tag{\theequation}}
\newcommand{\leqnomode}{\tagsleft@true}
\newcommand{\reqnomode}{\tagsleft@false}
\DeclareMathOperator{\Tr}{Tr}
\numberwithin{equation}{section}
\let\oldtocsection=\tocsection
\let\oldtocsubsection=\tocsubsection
\let\oldtocsubsubsection=\tocsubsubsection
\renewcommand{\tocsection}[2]{\hspace{0em}\oldtocsection{#1}{#2}}
\renewcommand{\tocsubsection}[2]{\hspace{2em}\oldtocsubsection{#1}{#2}}
\renewcommand{\tocsubsubsection}[2]{\hspace{4em}\oldtocsubsubsection{#1}{#2}}
\title{Elliptic Harnack's inequality for a singular nonlinear parabolic equation in non-divergence form}
\author{Tapio Kurkinen}
\email{tapio.j.kurkinen@jyu.fi}
\author{Mikko Parviainen}
\email{mikko.j.parviainen@jyu.fi}
\author{Jarkko Siltakoski}
\email{jarkko.j.m.siltakoski@jyu.fi}
\address{Department of Mathematics and Statistics
	University of Jyv{\"{a}}skyl{\"{a}}
	PO Box 35, FI-40014 Jyv{\"{a}}skyl{\"{a}}, Finland}
\date{\today}
\keywords{Harnack's inequality, nonlinear equation, $p$-parabolic equation}
\subjclass[2020]{35K55 (primary); 35K67, 35D40 (secondary)}
\begin{document}

	\begin{abstract}
		We prove an elliptic Harnack's inequality for a general form of a parabolic equation that generalizes both the standard parabolic $p$-Laplace equation and the normalized version that has been proposed in stochastic game theory. This version of the inequality doesn't require the intrinsic waiting time and we get the estimate with the same time level on both sides of the inequality.
	\end{abstract}
	\maketitle
	
	\section{Introduction}
	In his monograph, DiBenedetto \cite[Theorem VII.1.2]{Dibenedetto1993} proved elliptic Harnack's inequality for the divergence form $p$-parabolic equation  in the supercritical case. In this case, the intrinsic waiting time required for degenerate parabolic equations is no longer needed. Instead he established Harnack's inequality with the same time level on both sides of the estimate akin to the elliptic case.
	
	In this paper, we prove elliptic Harnack's inequality for the following general non-divergence form version of the nonlinear parabolic equation
	\begin{equation}
	\label{eq:rgnppar}
	\partial_t u=\abs{\nabla u}^{q-p}\div\left(\abs{\nabla u}^{p-2}\nabla u\right)=\abs{\nabla u}^{q-2}(\Delta u + (p-2)\Delta_\infty^Nu),
	\end{equation}
	for a natural range of exponents. When $q = 2$, we get the normalized $p$-parabolic equation arising from the game theory, and when $q = p$, it is the standard $p$-parabolic equation. 	
	
	Elliptic Harnack's inequality, Theorem \ref{thm:ellipticharnack}, states that a non-negative solution satisfies the following local a priori estimate
	\begin{align*}
	\gamma^{-1}\sup_{B_r(x_0)}u(\cdot,t_0)\leq u(x_0,t_0)\leq\gamma\inf_{B_{r}(x_0)}u(\cdot,t_0).
	\end{align*}	
	DiBenedetto's proof uses the theory of weak solutions. Since the equation is in a non-divergence form, unless $q = p$, the usual weak theory based on integration by parts is not available in our case.
	Our proof uses the parabolic (forward) Harnack's inequality proven by Parviainen and Vázquez \cite{Parviainen2020} to estimate the solution in the past, constructing an explicit supersolution with infinite boundary values and using the comparison principle to get an estimate at our original time level. The idea both in the proof of the forward Harnack as well as in the derivation of the explicit supersolution is based on an equivalence result. Heuristically speaking, radial solutions to the original non-divergence form problem can be interpreted as solutions to the divergence form $p$-parabolic equation, but in a fictitious space dimension $d$.

	Nash discussed the possibility of elliptic Harnack's inequality for a parabolic equation in \cite{Nash1958}. Later Moser \cite{Moser1964} pointed out that such an estimate does not hold for the heat equation. For the $p$-parabolic equation elliptic Harnack's inequality is obviously false if $p>2$, and holds for $\frac{2n}{n+1}<p<2$. In addition to \cite{Dibenedetto1993}, Harnack's inequalities in the singular range have been studied for example by Dibenedetto, Gianazza and Vespri in \cite{Dibenedetto2009,Dibenedetto2010} and \cite{Dibenedetto2012}. 	The intrinsic forward Harnack's inequality for weak solutions of the $p$-parabolic equation was proven by Dibenedetto in \cite{Dibenedetto1988} and \cite{Dibenedetto1992}, see also \cite{Dibenedetto1993}, and later for equations with growth of order $p$ by Dibenedetto, Gianazza and Vespri in \cite{Dibenedetto2008} and by Kuusi in \cite{Kuusi2008}. For non-divergence form equations parabolic Harnack's inequalities and related H\"{o}lder regularity under additional restrictions were studied by Cordes \cite{Cordes1956} and Landis \cite{Landis1971}. With bounded and measurable coefficients parabolic Harnack's inequality was established  by Krylov and Safonov \cite{Krylov1981}.

	Since the equation \eqref{eq:rgnppar} is in non-divergence form except in a special case, the solutions in this paper are understood in the viscosity sense. 
	The suitable concept of viscosity solutions to the general equations (\ref{eq:rgnppar}) was established by Ohnuma and Sato \cite{Ohnuma1997}. In the special case $q=2$, we get the normalized $p$-parabolic equation that arises from the stochastic game theory \cite{Manfredi2010}. This non-divergence form special case as well as the general equation (\ref{eq:rgnppar}) have recently received attention in the works of Jin-Silvestre \cite{Jin2017}, Imbert-Jin-Silvestre \cite{Imbert2019}, H{\o}eg-Lindqvist \cite{Hoeg2019}, and Dong-Fa-Zhang-Zhou \cite{Dong2020} in addition to \cite{Parviainen2020}.
	
	\section{Main Results}
	Denote
	\begin{equation}
	\Delta_p^qu:=\abs{\nabla u}^{q-p}\div\left(\abs{\nabla u}^{p-2}\nabla u\right)=\abs{\nabla u}^{q-2}(\Delta u + (p-2)\Delta_\infty^Nu),
	\end{equation}
	where $p>1$ and $q>1$ are real parameters and
	\begin{equation*}
	\Delta_\infty^Nu=\sum_{i,j=1}^{n}\frac{\partial_{x_i}u \, \partial_{x_j}u \, \partial_{x_i x_j}u}{\abs{\nabla u}^2}
	\end{equation*}
	so the equation \eqref{eq:rgnppar} gets the form $\partial_tu=\Delta_p^qu$. Because the dimensions of the sets play part in some of the estimates we shall denote
	\begin{align*}
	Q_{r}^{-}(\theta)&=B_r(0)\times(-\theta r^q,0],\\
	Q_{r}^{+}(\theta)&=B_r(0)\times(0,\theta r^q)
	\end{align*}
	where $\theta$ is a positive parameter that determines the time-wise length of the cylinder relative to $r^q$. We denote the union of these sets as
	\begin{equation*}
	Q_{r}(\theta)=Q_{r}^{+}(\theta)\cup Q_{r}^{-}(\theta)
	\end{equation*} and when not located at the origin, we denote
	\begin{align*}
	(x_0,t_0)+Q_{r}^{-}(\theta)&=B_r(x_0)\times(t_0-\theta r^q,t_0],\\
	(x_0,t_0)+Q_{r}^{+}(\theta)&=B_r(x_0)\times(t_0,t_0+\theta r^q),\\
	(x_0,t_0)+Q_{r}(\theta)&=B_r(x_0)\times(t_0-\theta r^q,t_0+\theta r^q).
	\end{align*}
	Our main result is that non-negative viscosity solutions to \eqref{eq:rgnppar} satisfy the following \textit{elliptic Harnack's inequality} if the following range condition holds
	\begin{equation}
	\label{eq:range}
	2>q>\begin{cases}
	1 & \text{ if }p\geq\frac{1+n}{2},\\
	\frac{2(n-p)}{n-1}& \text{ if }1<p<\frac{1+n}{2}.
	\end{cases}
	\end{equation}
	We inspect the optimality of this range after the formulation of the theorem.
	\begin{theorem}[Elliptic Harnack's inequality]
		\label{thm:ellipticharnack}
		Let $u \geq 0$ be a viscosity solution to \eqref{eq:rgnppar} in $Q_{1}^{-}(1)$ and the range condition \eqref{eq:range} holds. Fix $\left(x_{0}, t_{0}\right) \in Q_{1}^{-}(1)$. Then for any $\sigma > 1$ there exist $\gamma=\gamma(n, p, q, \sigma)$ and ${c}={c}(n, p, q, \sigma)$ such that
		\begin{equation*}
		\gamma^{-1}\sup_{B_r(x_0)}u(\cdot,t_0)\leq u(x_0,t_0)\leq\gamma\inf_{B_{r}(x_0)}u(\cdot,t_0),
		\end{equation*}
		whenever $(x_0,t_0)+Q_{\sigma r}(\theta)\subset Q_{1}^{-}(1)$ where
		\begin{equation*}
		\theta={c}u\left(x_{0}, t_{0}\right)^{2-q}.
		\end{equation*}
	\end{theorem}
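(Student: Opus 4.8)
The plan is to prove the two inequalities separately, reducing each to the parabolic (forward) Harnack's inequality of Parviainen–Vázquez together with a comparison argument using an explicit supersolution with infinite boundary values.

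For the left inequality $\sup_{B_r(x_0)}u(\cdot,t_0)\le\gamma\,u(x_0,t_0)$, the idea is to move backward in time. By scaling and translating we may assume $(x_0,t_0)=(0,0)$. Set $M=u(0,0)$ and choose the waiting time $\theta=c M^{2-q}$ so that $Q_{\sigma r}^-(\theta)$ fits inside the domain. Applying the intrinsic forward Harnack's inequality of \cite{Parviainen2020} centered at the earlier point $(0,-\theta r^q)$ (or at a suitable earlier time comparable to $\theta r^q$) we obtain a lower bound for $u$ at time $-\theta r^q$ on a ball, of the form $u(\cdot,-\theta r^q)\ge c' M$ on $B_{\rho}$ for an appropriate $\rho$. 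Since $\theta r^q\approx M^{2-q}r^q$, this is exactly the intrinsic scaling at "height" $M$, so the waiting time condition in the forward Harnack is met with the correct constant. The point is that in the singular range $q<2$ the forward inequality, once run at the natural height, produces an estimate that can be propagated \emph{forward} back to time $t_0=0$ without losing the time level. This last propagation is where the explicit barrier enters: one constructs, via the dimensional equivalence principle alluded to in the introduction (radial solutions of $\partial_t u=\Delta_p^q u$ correspond to solutions of the divergence-form $p$-parabolic equation in a fictitious dimension $d$), an explicit subsolution that is bounded below by a constant multiple of $M$ on $B_r$ at time $0$ and lies below $u$ on the parabolic boundary of $(0,0)+Q_{\sigma r}^-(\theta)$; comparison then yields the claim.

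For the right inequality $u(x_0,t_0)\le\gamma\inf_{B_r(x_0)}u(\cdot,t_0)$, the roles are reversed: here one wants to bound $u$ from below on $B_r$ at the \emph{same} time $t_0$. The plan is to build an explicit radial \emph{supersolution} $\Phi$ to $\partial_t u=\Delta_p^q u$ on a punctured cylinder, singular (equal to $+\infty$) on the lateral boundary $\partial B_{\sigma r}(x_0)\times(\cdot)$ and small at the center, again using the fictitious-dimension reduction to a known explicit solution of the $p$-parabolic equation (a Barenblatt-type or separable profile adapted to dimension $d$). One normalizes $\Phi$ so that on the appropriate past time-slice $\Phi\ge u$ (using the already-established backward bound, or directly the forward Harnack from the earlier excerpt) and $\Phi\ge u=+\infty$ on the lateral boundary; the comparison principle for viscosity solutions then forces $u\le\Phi$ throughout, and evaluating $\Phi$ on $B_r(x_0)\times\{t_0\}$ gives $u(x_0,t_0)\le\gamma\inf_{B_r(x_0)}u(\cdot,t_0)$ after absorbing constants. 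The range condition \eqref{eq:range} is precisely what guarantees that the fictitious dimension $d$ falls in the supercritical regime $\tfrac{2d}{d+1}<p$ where such explicit self-similar supersolutions with the right decay exist and are integrable in the relevant sense.

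**Main obstacle.** The crux is the construction and verification of the explicit barrier in the fictitious dimension: one must exhibit a concrete radial profile, check by direct computation that it is a (super- or sub-)solution of $\partial_t u=\Delta_p^q u$ in the punctured cylinder, verify the infinite lateral boundary values and the correct normalization at the center, and confirm that the exponents work out exactly under \eqref{eq:range}. A secondary difficulty is matching the waiting-time constant $c$ coming from the forward Harnack's inequality of \cite{Parviainen2020} with the geometry of $(x_0,t_0)+Q_{\sigma r}(\theta)$ so that everything fits inside $Q_1^-(1)$; this is bookkeeping but must be done carefully since $\theta$ itself depends on $u(x_0,t_0)$. Once the barrier is in hand, the comparison principle (valid for the Ohnuma–Sato notion of viscosity solution) finishes both halves routinely.
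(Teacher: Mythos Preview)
Your outline has the right ingredients (the forward Harnack of \cite{Parviainen2020}, an explicit radial barrier with infinite lateral boundary values, and the comparison principle), but they are assembled in the wrong order, and one essential intermediate step is missing.

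\textbf{Sub- versus supersolution.} For the left inequality $\sup_{B_r(x_0)}u(\cdot,t_0)\le\gamma\,u(x_0,t_0)$ you need an \emph{upper} bound on $u$, hence a \emph{supersolution} sitting above $u$, not a subsolution. A subsolution bounded below by $cM$ on $B_r\times\{t_0\}$ would give $\inf_{B_r}u(\cdot,t_0)\ge cM$, which is the other inequality. The roles in your two halves are interchanged.

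\textbf{The missing backward Harnack.} To place a supersolution above $u$ on the bottom of a cylinder you must first know that $u$ is bounded there, i.e.\ $\sup_{B_{2r}(x_0)}u(\cdot,t_0-\theta(2r)^q)\le\mu\,u(x_0,t_0)$. The forward inequality of \cite{Parviainen2020} does not give this: centered at a past point it says $u(\text{past center})\le\mu\inf u(\cdot,t_0)$, which controls neither the past supremum nor the past infimum in terms of $u(x_0,t_0)$. What is needed is a genuine \emph{backward} Harnack, and this is a separate theorem the paper proves first (Theorem~\ref{thm:backharnack}). Its proof is not routine bookkeeping: it uses the explicit infinite-boundary-value supersolution of Lemma~\ref{lem:supersolution} together with the forward Harnack in a topological argument (showing a certain set $\mathcal{U}_\alpha$ is both open and nonempty). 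Your outline skips this entirely.

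\textbf{The right inequality.} The paper does use a supersolution for $u(x_0,t_0)\le\gamma\inf_{B_r}u(\cdot,t_0)$ as well, but the key idea you omit is to center the whole construction at a minimum point $\hat x$ of $u(\cdot,t_0)$ in $\overline{B_r(x_0)}$. One applies the backward Harnack at $\hat x$, builds the supersolution $\hat v$ on $B_{2r}(\hat x)$, and then evaluates $u\le\hat v$ at $(x_0,t_0)$; this yields $u(x_0,t_0)\le\bar\gamma\,u(\hat x,t_0)=\bar\gamma\inf_{B_r}u(\cdot,t_0)$. Simply evaluating $u\le\Phi$ on $B_r(x_0)$, as you suggest, produces a sup bound, not an inf bound.

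\textbf{The $\sigma$-dependence.} Finally, the argument above only works when the cylinder $(x_0,t_0)+Q_{\frac{13}{\alpha}r}(\theta)$ fits in the domain, with $\alpha$ a fixed structural constant. To obtain the stated version with arbitrary $\sigma>1$ one needs a Harnack chain along radial segments, checking inductively that the intrinsic cylinders stay inside $Q_1^-(1)$; the final constants then depend on $\sigma$ through the chain length. This step is absent from your plan.
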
 
	Our proof relies on comparison principle and parabolic Harnack's inequality proven for viscosity solutions of \eqref{eq:rgnppar} in \cite{Parviainen2020} and construction of an explicit viscosity supersolution with infinite boundary values. Existence of such solutions relies on the singularity of the equation and was proven for the $p$-parabolic case in \cite[Theorem 4.1]{Bonforte2010}. Here we constructed a concrete solution in order to obtain an explicit proof at each step. If $q$ approaches either end point of range \eqref{eq:range} the constant $\gamma$ tends to infinity and $c$ approaches zero.
	
	Elliptic Harnack's inequality may fail outside of the range condition \eqref{eq:range}. To illustrate
	this, recall a result by Parviainen and Vázquez \cite{Parviainen2020} according
	to which radial viscosity solutions to \eqref{eq:rgnppar} are
	equivalent to weak solutions of the one-dimensional equation
	\begin{equation}
	\partial_{t}u-\kappa\Delta_{q,d}u=0\quad\text{in }(-R,R)\times(0,T).\label{eq:radial eq}
	\end{equation}
	Here $\kappa:=(p-1)/(q-1)$ and (denoting by $u_r$ the radial derivative of $u$)
	\[
	\Delta_{q,d}u:=\left|u_{r}\right|^{q-2}\left((q-1)u_{rr}+\frac{d-1}{r}u_{r}\right) 
	\]
	is heuristically the usual radial $q$-Laplacian in a fictitious dimension
	\begin{equation*}
	d:=\frac{(n-1)(q-1)}{p-1}+1.
	\end{equation*} If $d$ happens to be an integer, then solutions
	to \eqref{eq:radial eq} are equivalent to radial weak solutions of
	the $q$-parabolic equation in $B_R\times(0,T)\subset\mathbb{R}^{d+1}$. On the other hand, the counterexamples in \cite[p.~140]{Dibenedetto2012} show that elliptic Harnack's inequality for the $q$-parabolic equation in $\mathbb{R}^{d+1}$ holds only in the range $2d/(d+1)<q<2$, from which one can derive the range condition \eqref{eq:range} by recalling the definition of $d$. In fact, the counterexample in \cite{Dibenedetto2012} directly translates into our context even when $d$ is not an integer. To see this, suppose that $1<p<(1+n)/2$ and set $q=2(n-p)/(n-1)$. Then in particular $q>1$. We define in radial coordinates 
	\begin{equation*}
	u(r,t):=(\left|r\right|^{\frac{2d}{d-1}}+e^{\kappa bt})^{-(d-1)/2}\quad\text{for all }r\in\mathbb{R}.
	\end{equation*}
	By a direct computation $u$ satisfies \eqref{eq:radial eq} classically in $(-R,-\delta)\cup(\delta,R)$ for any $R>0$ and small $\delta>0$. Letting $\delta\rightarrow 0$ then shows that $u$ is a weak solution in the sense of \cite{Parviainen2020} and therefore a viscosity solution to \eqref{eq:rgnppar} in $\mathbb{R}^{n+1}$. However, $u$ fails to satisfy elliptic Harnack's inequality since $u(0,t)/u(1,t)\rightarrow 0$ as $t\rightarrow-\infty$.
	
	Finally, we point out that in the case $q=p$, the range condition becomes
	\begin{equation*}
	2>p>\frac{2n}{n+1}=:p_*
	\end{equation*} the so called supercritical $p$-parabolic equation for which we have both intrinsic \cite{Dibenedetto2008, Kuusi2008} and elliptic Harnack's inequality \cite{Dibenedetto1993}. As mentioned, in the subcritical case $p\leq p_*$ both of the inequalities fail \cite{Dibenedetto2012} but there are some known Harnack type results, see for example \cite[Proposition 1.1]{Dibenedetto2009}.
	\section{Preliminaries}
	Apart from the case $q=p$, the equation \eqref{eq:rgnppar} is in non-divergence form and we cannot use integration by parts to define standard weak solutions and will thus use the concept of viscosity solutions. Moreover the equation is singular when $2>q>1$, and thus we need to restrict the class of test functions to retain good priori control on the behaviour near the singularities and make sure the limits remain well defined. We use the  definition with admissible test functions introduced in \cite{Ishii1995} for a different  class of equations and in \cite{Ohnuma1997} for our setting. This is the standard definition in this context. In the case of the $p$-parabolic equation, that is $q=p$, the notions of weak and viscosity solution are equivalent for all $p\in (1,\infty)$ \cite{Juutinen2001, Parviainen2020, Siltakoski2021}.

	Let $\Om\subset\Rn$ be a domain and denote $\Om_T=\Om\times(0,T)$ the space-time cylinder and
	\begin{equation*}
	\partial_{p}\Om=\left(\Om\times\{0\}\right)\cup\left({\partial\Om\times[0,T]}\right)
	\end{equation*}
	its parabolic boundary. Denote
	\begin{equation}
	\label{eq:gnnpparf}
	F(\eta,X)=\abs{\eta}^{q-2}\Tr\left(X-(p-2)\frac{\eta\otimes \eta}{\abs{\eta}^2}X\right)
	\end{equation}
	so that
	\begin{align*}
	F(\nabla u,D^2u)&=\abs{\nabla u}^{q-2}(\Delta u+(p-2)\Delta_\infty^Nu)=\Delta_p^qu
	\end{align*}
	whenever $\nabla u\not=0$.
	Let $\mathcal{F}(F)$ be the set of functions $f\in C^2([0,\infty))$ such that
	\begin{equation*}
	f(0)=f'(0)=f''(0)=0 \text{ and } f''(r)>0 \text{ for all }r>0,
	\end{equation*}
	and also require that for $g(x):=f(\abs{x})$, it holds that
	\begin{equation*}
	\lim_{\substack{x\to0\\x\not=0}}F(\nabla g,D^2g)=0.
	\end{equation*}
	This set $\mathcal{F}(F)$ is never empty because it is easy to see that $f(r)=r^\beta\in\mathcal{F}(F)$ for any $\beta>\max(q/(q-1),2)$. Note also that if $f\in\mathcal{F}(F)$, then $\lambda f\in\mathcal{F}(F)$ for all $\lambda>0$.
	
	Define also the set
	\begin{equation*}
	\Sigma=\{\sigma\in C^1(\R)\mid \sigma \text{ is even}, \sigma(0)=\sigma'(0)=0, \text{ and }\sigma(r)>0 \text{ for all }r>0\}.
	\end{equation*}
	We use these $\mathcal{F}(F)$ and $\Sigma$ to define admissible set of test functions for viscosity solutions.
	\begin{definition}\sloppy
		A function $\vp\in C^2(\Om_T)$ is admissible if for any $(x_0,t_0)\in\Om_T$ with ${\nabla \vp(x_0,t_0)=0}$, there are $\delta>0$, $f\in\mathcal{F}(F)$ and $\sigma\in\Sigma$ such that
		\begin{equation*}
		\abs{\vp(x,t)-\vp(x_0,t_0)-\partial_{t}\vp(x_0,t_0)(t-t_0)}\leq f(\abs{x-x_0})+\sigma(t-t_0),
		\end{equation*}
		for all $(x,t)\in B_\delta(x_0)\times (t_0-\delta, t_0+\delta)$.
	\end{definition}
	Note that by definition a function $\vp$ is automatically admissible in $\Om_T$ if either $\nabla\vp(x,t)\not=0$ in $\Om_T$ or the function $-\vp$ is admissible in $\Om_T$.
	\begin{definition}
		A function $u:\Om_T\rightarrow\mathbb{R}\cup\left\{ \infty\right\} $
		is a viscosity supersolution to
		\[
		\partial_{t}u=\Delta_{p}^{q}u\quad\text{in }\Om_T
		\]
		if the following three conditions hold.
		\begin{enumerate}
			\label{def:super}
			\item $u$ is lower semicontinuous,
			\item $u$ is finite in a dense subset of $\Om_T$,
			\item whenever an admissible $\vp\in C^{2}(\Om_T)$ touches $u$ at $(x,t)\in\Om_T$
			from below, we have
			\[
			\begin{cases}
			\partial_{t}\vp(x,t)-\Delta_{p}^{q}\vp(x,t)\geq0 & \text{if }\nabla \vp(x,t)\not=0,\\
			\partial_{t}\vp(x,t)\geq0 & \text{if }\nabla \vp(x,t)=0.
			\end{cases}
			\]
		\end{enumerate}
		A function $u:\Om_T\rightarrow\mathbb{R}\cup\left\{ -\infty\right\} $
		is a viscosity subsolution if $-u$ is a viscosity supersolution. A function $u:\Om_T\rightarrow\mathbb{R} $
		is a viscosity solution if it is a supersolution and a subsolution.
	\end{definition}
	
	Our proof uses the following comparison principle, which is Theorem 3.1 in \cite{Ohnuma1997}.
	\begin{theorem}
		\label{thm:comp}
		Let $\Omega\subset\Rn$ be a bounded domain. Suppose that u is viscosity supersolution and $v$ is a viscosity subsolution to \eqref{eq:rgnppar} in $\Omega_{T}$. If
		$$
		\infty \neq \limsup _{\Omega_{T} \ni(y, s) \rightarrow(x, t)} v(y, s) \leq \liminf _{\Omega_{T} \ni(y, s) \rightarrow(x, t)} u(y, s) \neq-\infty
		$$
		for all $(x, t) \in \partial_{p} \Omega_{T},$ then $v \leq u$ in $\Omega_{T}$.
	\end{theorem}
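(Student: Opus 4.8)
The plan is to prove the two inequalities separately and to reduce everything to the forward parabolic Harnack inequality of \cite{Parviainen2020} combined with the comparison principle (Theorem \ref{thm:comp}), using an explicit supersolution with infinite boundary values as the comparison barrier. Throughout, write $M:=u(x_0,t_0)$ and normalize so that $(x_0,t_0)=(0,0)$. The right-hand inequality $u(x_0,t_0)\le\gamma\inf_{B_r(x_0)}u(\cdot,t_0)$ is the one that genuinely needs the construction. For this, fix $x_1\in B_r$ and suppose, for contradiction, that $u(x_1,0)$ is much smaller than $M$. We will compare $u$ from above, on a backward cylinder $Q_{\sigma r}^{-}(\theta)$ with $\theta=cM^{2-q}$, against an explicit radial supersolution $V$ centered near $x_1$ whose initial values (at the bottom time $t=-\theta(\sigma r)^q$) are $+\infty$ on $B_{\sigma r}$ and whose lateral values dominate $u$ trivially (since $u\ge0$ we only need $V\ge0$ there, or $V\to\infty$ at the lateral boundary as well). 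The barrier $V$ is built via the equivalence principle recalled in the introduction: a radial profile $V(x,t)=\phi(|x-x_1|,t)$ solving \eqref{eq:radial eq} in the fictitious dimension $d=(n-1)(q-1)/(p-1)+1$, for which in the range \eqref{eq:range} one has $2d/(d+1)<q<2$, so that Barenblatt-type subsolutions/supersolutions of the $q$-parabolic equation in dimension $d+1$ are available with the correct decay. One takes $V$ of Barenblatt form with free time-origin and free mass, chosen so that at time $t=0$ one has $V(x_1,0)$ as small as desired while $V$ still lies above $u$ on the entire parabolic boundary of the comparison cylinder; the comparison principle then forces $u(x_1,0)\le V(x_1,0)$, and letting the time-origin parameter run produces an explicit upper bound of the form $u(x_1,0)\le \gamma^{-1}$ times something controlled, the key point being that the forward Harnack inequality applied on $Q_{\sigma r}^{-}(\theta)$ gives a \emph{lower} bound for $u$ at the bottom of the cylinder in terms of $M$, which feeds into the choice of mass for $V$ and closes the loop.

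More precisely, the left-hand inequality $\gamma^{-1}\sup_{B_r(x_0)}u(\cdot,t_0)\le u(x_0,t_0)$ I would obtain directly from the parabolic (forward) Harnack inequality of \cite{Parviainen2020}: that inequality, applied with base point $(x_0,t_0)$, yields a positive lower bound $u(y,s)\ge c_0 M$ on a full backward cylinder $B_{\sigma r}(x_0)\times(t_0-\theta(\sigma r)^q,t_0]$ with $\theta\sim M^{2-q}$ (this is exactly the waiting-time/intrinsic-geometry statement, which is available because $q<2$ makes the equation singular). Then, to transfer this lower bound \emph{at the same time level} $t_0$ to an upper bound for $\sup_{B_r}u(\cdot,t_0)$, one again uses the barrier: comparing $u$ from above on $B_{\sigma r}(x_0)\times(t_0-\theta(\sigma r)^q,t_0]$ with the explicit supersolution $V$ whose bottom values are $+\infty$ and which at time $t_0$ has decayed to size $\gamma\cdot(c_0 M)=\gamma'M$ uniformly on $B_r(x_0)$. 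The singular decay rate of $V$ in time — available precisely in the range $2d/(d+1)<q<2$, equivalently \eqref{eq:range} — is what makes $V(x,t_0)$ finite and bounded by $\gamma'M$; this is the step where the range condition is used essentially, and it is also where the constants blow up as $q$ approaches either endpoint, matching the statement that $\gamma\to\infty$ and $c\to0$ there.

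The main steps, in order, would be: (1) reduce to $(x_0,t_0)=(0,0)$ and record the forward Harnack lower bound on the backward cylinder $Q_{\sigma r}^{-}(\theta)$, $\theta=cM^{2-q}$, from \cite{Parviainen2020}; (2) construct the explicit radial supersolution $V$ via the equivalence with the $q$-parabolic equation in fictitious dimension $d$, checking that it is a genuine viscosity supersolution of \eqref{eq:rgnppar} (using the structure of $\mathcal F(F)$ and the admissibility definition to handle the point $\nabla V=0$, and the singular-range decay to guarantee infinite bottom values and finite values at $t=0$); (3) verify the boundary ordering $V\ge u$ on $\partial_p$ of the comparison cylinder — infinite at the bottom, nonnegative/infinite on the lateral boundary, which is immediate since $u\ge0$ — and invoke Theorem \ref{thm:comp} to get $u\le V$ throughout; (4) evaluate $V$ at time $t_0$ on $B_r$ and at $(x_0,t_0)$, optimizing the free parameters (mass, time-origin) of the Barenblatt profile to extract both inequalities with a common $\gamma$; (5) track the dependence of all constants on $n,p,q,\sigma$ and confirm the degeneration as $q\to$ the endpoints of \eqref{eq:range}.

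The hard part will be step (2) together with the parameter bookkeeping in step (4): one must produce an \emph{explicit} radial supersolution that simultaneously has $+\infty$ boundary values on the bottom face, remains above $u$ on the lateral boundary, is a legitimate viscosity supersolution across its (possibly empty) critical set, and whose value at the top time level $t_0$ can be made to be a controlled multiple of $u(x_0,t_0)$ on all of $B_r(x_0)$ — with the scaling $\theta=cu(x_0,t_0)^{2-q}$ forced by the intrinsic geometry. The equivalence result reduces the supersolution check to a one-dimensional computation for $\Delta_{q,d}$, and the Barenblatt solution of the $q$-parabolic equation in dimension $d+1$ (which exists with the right decay exactly when $2d/(d+1)<q<2$) supplies the profile; but verifying that the resulting $V$ is admissible near $r=0$ in the sense required by Definition (checking the $f\in\mathcal F(F)$, $\sigma\in\Sigma$ envelope) and that letting the smoothing parameter $\delta\to0$ preserves the supersolution property are the technically delicate points, entirely parallel to the counterexample computation already sketched in the introduction.
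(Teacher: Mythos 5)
Your proposal does not address the statement you were asked to prove. The statement (Theorem \ref{thm:comp}) is the \emph{comparison principle}: if $u$ is a viscosity supersolution and $v$ a viscosity subsolution of \eqref{eq:rgnppar} in a bounded cylinder $\Omega_T$, and $v\le u$ on the parabolic boundary in the $\limsup$/$\liminf$ sense, then $v\le u$ throughout $\Omega_T$. What you have sketched instead is a proof of the elliptic Harnack inequality (essentially the strategy of Proposition \ref{prop:elliptic harnack} and Theorem \ref{thm:ellipticharnack}): forward Harnack from \cite{Parviainen2020}, an explicit barrier with infinite boundary values, and a comparison argument. Worse, in step (3) you explicitly \emph{invoke} Theorem \ref{thm:comp} as a tool, so even on its own terms your argument is circular with respect to the statement at hand: you cannot use the comparison principle to prove the comparison principle.

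For the record, the paper does not prove this theorem at all; it is quoted as Theorem 3.1 of Ohnuma and Sato \cite{Ohnuma1997}. An actual proof would have nothing to do with Harnack inequalities or Barenblatt-type barriers. It requires the standard but delicate viscosity-solution machinery for singular equations: doubling of variables, the parabolic theorem on sums (Jensen--Ishii lemma) applied to sup/inf-convolutions or to the doubled function $v(x,t)-u(y,t)-\frac{|x-y|^4}{4\varepsilon}$-type penalizations, and a careful case analysis at points where the gradient of the test function vanishes, which is exactly where the restricted class of admissible test functions built from $\mathcal{F}(F)$ and $\Sigma$ enters (one penalizes with functions of the form $f(|x-y|)+\sigma(t-s)$ with $f\in\mathcal F(F)$, $\sigma\in\Sigma$, so that the singular operator $F(\eta,X)$ in \eqref{eq:gnnpparf} has a well-defined limit as $\eta\to0$ along the relevant sequences). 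None of these ingredients appears in your proposal, so as a proof of Theorem \ref{thm:comp} it is entirely missing the required argument.
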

	We also use the following forward Harnack's inequality, which is Theorem 7.3 in \cite{Parviainen2020}.
	\begin{theorem}
		\label{thm:parharnack}
		Let $u \geq 0$ be a viscosity solution to \eqref{eq:rgnppar} in $Q_{1}^{-}(1)$ and the range condition \eqref{eq:range} holds or $q\geq2$. Fix $\left(x_{0}, t_{0}\right) \in Q_{1}^{-}(1)$ such that $u(x_0,t_0)>0$. Then there exist ${\mu=\mu(n, p, q)}$ and ${c}={c}(n, p, q)$ such that
		\begin{equation*}
		u\left(x_{0}, t_{0}\right) \leq \mu \inf _{B_{r}\left(x_{0}\right)} u\left(\cdot, t_{0}+\theta r^q\right)
		\end{equation*}
		where
		\begin{equation*}
		\theta={c}u\left(x_{0}, t_{0}\right)^{2-q},
		\end{equation*}
		whenever $(x_0,t_0)+Q_{4r}(\theta) \subset Q_{1}^{-}(1)$.
	\end{theorem}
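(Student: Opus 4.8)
The plan is to prove the forward (intrinsic) lower bound $u(x_0,t_0)\le\mu\,\inf_{B_r(x_0)}u(\cdot,t_0+\theta r^q)$ by running the DiBenedetto--Gianazza--Vespri scheme for the singular supercritical $q$-parabolic equation in the present non-divergence setting, importing the divergence-form estimates the scheme needs through the radial-equivalence principle of \cite{Parviainen2020}, and carrying out the positivity-spreading step by an explicit Barenblatt-type comparison barrier in the fictitious dimension $d=(n-1)(q-1)/(p-1)+1$. First I would normalize: translating $(x_0,t_0)$ to the origin and applying the scaling $(x,t)\mapsto M^{-1}u(x,M^{2-q}t)$ with $M:=u(0,0)$, which leaves \eqref{eq:rgnppar} invariant, reduces the statement to the following claim. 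For a viscosity solution $u\ge0$ with $u(0,0)=1$, defined on a neighbourhood of $\overline{Q_{4r}(c)}$, one has $\inf_{B_r}u(\cdot,cr^q)\ge1/\mu$, with $\mu$ and $c$ depending only on $n,p,q$.

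The heart of the matter is an expansion of positivity, in two stages. In the first stage one turns the single value $u(0,0)=1$ into a bulk lower bound of the form $u\ge\eta_0$ on a ball $B_{\rho_0\bar\rho}(0)$ at a time slice within $\tau_0\bar\rho^{\,q}$ of the origin, where $\eta_0,\rho_0,\tau_0$ are universal and $\bar\rho\le 4r$ is a ``balancing radius''. This is the De Giorgi part: a local sup-bound for subsolutions together with the oscillation and measure lemmas for \eqref{eq:rgnppar}, all valid here because the range condition \eqref{eq:range} is exactly the supercriticality $q>2d/(d+1)$; since \eqref{eq:rgnppar} is in non-divergence form, these estimates are obtained indirectly, through the equivalence of \cite{Parviainen2020} with the one-dimensional weighted divergence-form equation \eqref{eq:radial eq} (with $C^{1,\alpha}$-type regularity as in \cite{Imbert2019} as an auxiliary input), and $\bar\rho$ is extracted by DiBenedetto--Gianazza--Vespri's device of choosing a radius at which $u(0,0)$ and $\sup_{B_\rho}u(\cdot,0)$ are comparable modulo the intrinsic scaling factor. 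In the second stage one propagates the bulk bound forward in time: let $\mathcal B$ be the source-type (Barenblatt) solution of the $q$-parabolic equation in dimension $d$, which because of \eqref{eq:range} exists globally, is everywhere strictly positive, and spreads over balls of radius comparable to $t^{1/q}$; by the equivalence of \cite{Parviainen2020} a translated, rescaled and time-shifted copy of $\mathcal B$ is a viscosity subsolution of \eqref{eq:rgnppar}, and placing it below $u$ on the relevant bottom time slice (where $u\ge\eta_0$ on $B_{\rho_0\bar\rho}$ and $u\ge0$ elsewhere) and invoking the comparison principle, Theorem~\ref{thm:comp}, on $Q_{4r}(c)$ yields $u(\cdot,cr^q)\ge1/\mu$ on $B_r$. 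Iterating this mechanism along a finite chain of balls covering $B_r$ produces the bound at the single time level $t_0+\theta r^q$, $\theta=cu(x_0,t_0)^{2-q}$; the degenerate case $q\ge2$ runs through the same scheme with $\mathcal B$ compactly supported.

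The main obstacle is the first stage of the positivity expansion --- producing a quantitative lower bound on a ball of definite size out of the bare pointwise value $u(x_0,t_0)=1$, with no universal modulus of continuity at hand. This is precisely the place where the equivalence with the divergence-form equation \eqref{eq:radial eq} is indispensable, as it is what supplies the Caccioppoli and De Giorgi estimates that \eqref{eq:rgnppar} does not itself admit, and it is also what pins down the exact form of \eqref{eq:range}: supercriticality is needed both to run the De Giorgi iteration against the weight $r^{d-1}$ and to guarantee the global Barenblatt barrier, and it is what keeps the intrinsic waiting time $\theta=cu^{2-q}$ finite so that the comparison cylinder $Q_{4r}(\theta)$ has the right shape. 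A secondary, more technical point is that $d$ is generally not an integer; this is circumvented by working throughout with the one-dimensional weighted equation, for which $r^{d-1}$ is an admissible weight for every $d\ge1$, rather than with a genuine $q$-parabolic equation in integer spatial dimension.
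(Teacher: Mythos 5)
First, note that the paper itself does not prove Theorem~\ref{thm:parharnack}: it is imported verbatim as Theorem~7.3 of \cite{Parviainen2020}, so there is no in-paper proof to compare against. Your outline is in the same general spirit as the strategy used there (radial equivalence in the fictitious dimension $d$, explicit barriers, comparison), but as a proof it has genuine gaps.

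The central one is your first stage. You propose to obtain the local sup-bounds, Caccioppoli/De Giorgi and measure lemmas for solutions of \eqref{eq:rgnppar} ``through the radial-equivalence principle'', asserting that the equivalence ``supplies the Caccioppoli and De Giorgi estimates that \eqref{eq:rgnppar} does not itself admit''. It does not: the equivalence of \cite{Parviainen2020} identifies \emph{radial} viscosity solutions of \eqref{eq:rgnppar} with weak solutions of the one-dimensional weighted equation \eqref{eq:radial eq}; a general solution $u$ is not radial, so no energy estimate for $u$ can be transferred this way. What the equivalence yields is a supply of radial sub- and supersolutions to be used as comparison barriers (this is exactly how the present paper, and \cite{Parviainen2020}, exploit it), so the step from the bare value $u(x_0,t_0)=1$ to a bulk lower bound on a ball of definite size needs a different mechanism (e.g.\ interior H\"older/oscillation estimates for viscosity solutions of this class, as in \cite{Imbert2019}, combined with a DiBenedetto-type balancing-radius argument), and your sketch does not actually provide it. A second concrete problem is in your stage two: in the singular range the Barenblatt-type profile in dimension $d$ is everywhere strictly positive, so a translated copy of it is \emph{not} below $u$ on the lateral boundary of the bounded cylinder on which you invoke Theorem~\ref{thm:comp} --- precisely because a positive lower bound for $u$ there is what you are trying to prove. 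One must instead use a truncated or vertically shifted barrier (taking negative values, or tending to $-\infty$, near the lateral boundary), in the manner of the explicit barrier of Lemma~\ref{lem:supersolution} used elsewhere in the paper. As written, then, the proposal is a plausible program consistent with the known proof's philosophy, but it is not a complete argument: its two key steps rest on an invalid transfer of divergence-form estimates and on a comparison that fails on the lateral boundary.
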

	\begin{remark}
		\label{re:positivity}
		Note that the assumption $u(x_0,t_0)>0$ is needed only in the case $q\geq2$. Assuming $q$ satisfies the range condition \eqref{eq:range}, we can define $v(x,t)=u(x,t)+\eps>0$ for some small constant $\eps>0$. Using Theorem \ref{thm:parharnack} for this $v$, we get
		\begin{equation*}
		u\left(x_{0}, t_{0}\right)+\eps \leq \mu \inf _{B_{r}\left(x_{0}\right)} u\left(\cdot, t_{0}+c\left(u\left(x_{0}, t_{0}\right)+\eps\right)^{2-q} r^q\right)+\eps
		\end{equation*}
		and letting $\eps\to0$ gives us the intrinsic Harnack's inequality for $u$ by continuity.		
	\end{remark}
	\section{A viscosity supersolution with infinite boundary values}
	In this section we construct an explicit viscosity supersolution
	$v$ to \eqref{eq:rgnppar} in $B_{R}(0)\times(0,\infty)$ that
	takes infinite lateral boundary values and vanishes at the bottom
	of the cylinder. Recently infinite point source solutions have been constructed for supercritical $p$-parabolic equation in \cite{Giri2021}. While it is straightforward to the check that our function is a supersolution, it may not be	
	immediately clear how one obtains its expression and therefore we present the derivation. The construction is based on the equivalence result between radial viscosity solutions of \eqref{eq:rgnppar} and weak solutions of \eqref{eq:radial eq}, see \cite[Theorem 4.2]{Parviainen2020}.
	Solutions to the one-dimensional equation \eqref{eq:radial eq} can be at least formally
	obtained via the stationary equation
	\begin{equation}
	-\kappa\Delta_{q,d}v+\frac{v}{2-q}=0.\label{eq:stationary}
	\end{equation}
	Indeed, if $v$ solves (\ref{eq:stationary}) and we set $u(r,t)=t^{\frac{1}{2-q}}v(r)$,
	then we have formally
	\begin{equation*}
	\kappa\Delta_{q,d}u=\kappa\left|u_{r}\right|^{q-2}\left((q-1)u_{rr}+\frac{d-1}{r}u_{r}\right)=\kappa t^{\frac{1}{2-q}-1}\Delta_{q,d}v=\frac{1}{2-q}t^{\frac{1}{2-q}-1}v=\partial_{t}u,
	\end{equation*}
	so $u$ solves (\ref{eq:radial eq}). Now, the equation (\ref{eq:stationary})
	can be seen as a radial version of the equation
	\begin{equation}
	-\kappa\Delta_{q}v+\frac{v}{2-q}=0\label{eq:stationary q}
	\end{equation}
	in a fictitious dimension $d$. Here $\Delta_q$ denotes the usual $q$-Laplacian. Equations such as (\ref{eq:stationary q})
	have been widely studied in the literature when $d$ is an integer.
	In particular, D{\'{i}}az and Letelier \cite{Diaz1993} obtained the
	existence of local solutions with infinite boundary values to a large
	class of equations that includes (\ref{eq:stationary q}). In their
	proof they make use of an explicit radial supersolution with infinite
	boundary values (see \cite[Theorem 5.1]{Diaz1993}). Our idea
	is to take this supersolution and use the above transformations to
	obtain a supersolution to \eqref{eq:rgnppar}. This way one arrives
	to the expression \eqref{eq:explicit formula} below. 
	\begin{lemma}
		\label{lem:supersolution}
		Suppose that $1<q<2$, $p>1$ and let $R>0$. Then there exists a
		positive constant $\lambda=\lambda(n,p,q)$ such that the function
		\begin{equation}
		v(x,t):=\lambda t^{\frac{1}{2-q}}\left(\frac{1}{R^{\frac{1}{1-q}}(R^{\frac{q}{q-1}}-\left|x\right|^{\frac{q}{q-1}})}\right)^{\frac{q}{2-q}}\label{eq:explicit formula}
		\end{equation}
		is a viscosity supersolution to \eqref{eq:rgnppar} in
		$B_{R}(0)\times(0,\infty)$.
	\end{lemma}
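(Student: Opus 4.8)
\emph{Proof strategy.} The plan is to check directly, from the definition, that the explicit function $v$ is a viscosity supersolution; after a radial reduction this collapses to a one-variable ODE inequality. I would first record the structural facts: $v$ is real-valued and smooth on $(B_R(0)\setminus\{0\})\times(0,\infty)$, continuous on $B_R(0)\times(0,\infty)$ (it blows up only as $|x|\to R$ and vanishes as $t\to0^+$), and since $\alpha:=q/(q-1)>2$ for $q\in(1,2)$ it extends to a $C^1$ function up to the axis $\{x=0\}$ with $\nabla v(0,t)=0$. Thus conditions (1)--(2) of the definition hold, and the spatial gradient of $v$ vanishes exactly on the axis. So only two cases remain for the differential inequality: $x\neq0$, where $v$ is smooth with $\nabla v\neq0$ and the equation is classical, and $x=0$, where only $\partial_t v\geq0$ is demanded. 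The admissibility restriction on test functions only shrinks the class of $\varphi$ that must be tested, so I would simply ignore it.

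\emph{Radial reduction.} Next I would use that for a radial $C^2$ function $g(x)=G(|x|)$ one has $\Delta g=G''+\tfrac{n-1}{r}G'$ and $\Delta_\infty^N g=G''$, so that
\begin{equation*}
\Delta_{p}^{q}g=|G'|^{q-2}\Big((p-1)G''+\tfrac{n-1}{r}G'\Big)=\kappa\,\Delta_{q,d}g\qquad\text{wherever }G'\neq0,
\end{equation*}
with $\kappa=(p-1)/(q-1)$ and $d=1+(n-1)(q-1)/(p-1)$ exactly the fictitious dimension; this is where $d$ enters, in the spirit of \cite[Theorem 4.2]{Parviainen2020}. Writing $v(x,t)=t^{1/(2-q)}V(|x|)$ with $V$ the radial profile in \eqref{eq:explicit formula} and noting $(q-2)/(2-q)=-1$ gives, for $x\neq0$,
\begin{equation*}
\partial_t v-\Delta_{p}^{q}v=t^{\frac{1}{2-q}-1}\Big(\tfrac{1}{2-q}V-\kappa\,\Delta_{q,d}V\Big),
\end{equation*}
so the task collapses to showing that the profile $V$ is a supersolution of the stationary equation \eqref{eq:stationary}, i.e. $-\kappa\Delta_{q,d}V+V/(2-q)\geq0$ on $(0,R)$ --- precisely the role of the D\'iaz--Letelier supersolution, and what motivates the shape of $v$.

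\emph{The stationary inequality.} I would then carry out the one-variable computation. With $V(r)=\lambda_0(R^\alpha-r^\alpha)^{-m}$, $\alpha=q/(q-1)$, $m=q/(2-q)$ and $\lambda_0=\lambda R^{m/(q-1)}$, differentiating twice and simplifying via the identities $(q-1)(\alpha-1)=1$, $(q-1)\alpha=q$, $(m+1)(2-q)=2$ and $m(2-q)=q$ collapses the expression to
\begin{equation*}
\Delta_{q,d}V=(m\alpha)^{q-1}\lambda_0^{q-1}(R^\alpha-r^\alpha)^{-m}\big[dR^\alpha+(2m-d)r^\alpha\big].
\end{equation*}
Dividing the desired inequality by $\lambda_0(R^\alpha-r^\alpha)^{-m}>0$ turns it into $\kappa(m\alpha)^{q-1}\lambda_0^{q-2}\big[dR^\alpha+(2m-d)r^\alpha\big]\leq1/(2-q)$; since $0\leq r^\alpha<R^\alpha$ the bracket is at most $R^\alpha\max(d,2m)$, and since $q-2<0$ the left side decreases in $\lambda_0$, so substituting $\lambda_0=\lambda R^{m/(q-1)}$ (and using $m(2-q)=q$, which makes the power $R^\alpha$ cancel) the inequality holds as soon as
\begin{equation*}
\lambda^{2-q}\geq(2-q)\,\kappa\,(m\alpha)^{q-1}\max(d,2m),
\end{equation*}
which fixes $\lambda=\lambda(n,p,q)$. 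For integer $d$ this is literally the computation in \cite[Theorem 5.1]{Diaz1993}, and since each step is an algebraic identity in $d$ it remains valid for the generally non-integer $d$ occurring here.

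\emph{The axis, conclusion and main obstacle.} Finally I would dispatch the viscosity conditions. If an admissible $\varphi\in C^2$ touches $v$ from below at $(x_0,t_0)$ with $x_0\neq0$, then $\varphi-v$ has an interior local maximum there, so $\nabla\varphi=\nabla v\neq0$, $\partial_t\varphi=\partial_t v$ and $D^2\varphi\leq D^2v$ at $(x_0,t_0)$; degenerate ellipticity of $F$ (its coefficient matrix has eigenvalues $1$ and $p-1>0$) then gives $\partial_t\varphi-\Delta_{p}^{q}\varphi\geq\partial_t v-\Delta_{p}^{q}v\geq0$ by the computation above. If $x_0=0$, then $\nabla v(0,t_0)=0$ forces $\nabla\varphi(0,t_0)=0$, and $\partial_t\varphi(0,t_0)=\partial_t v(0,t_0)=\tfrac{\lambda V(0)}{2-q}\,t_0^{\frac{1}{2-q}-1}>0$, so the required inequality $\partial_t\varphi(0,t_0)\geq0$ holds; hence $v$ is a viscosity supersolution. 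I expect the main work to be the stationary ODE computation together with confirming that the sign is correct uniformly in $r\in(0,R)$ and for every admissible triple $(n,p,q)$ --- i.e. that $dR^\alpha+(2m-d)r^\alpha$ stays controlled even though $2m-d$ may have either sign --- and, as a subtler point, the behaviour on the degenerate axis, where the equation has no classical meaning and one must stay within the viscosity framework. If one wishes to avoid tracking $R$-powers, one can instead rescale to $R=1$ using the invariance $u(x,t)\mapsto A\,u(x/\rho,\,A^{2-q}\rho^{-q}t)$ of the equation and restore $R$ at the end.
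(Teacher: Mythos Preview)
Your proposal is correct and follows essentially the same approach as the paper: a direct radial computation showing the classical supersolution inequality for $x\neq0$, together with the observation that $\nabla v(0,t)=0$ and $\partial_t v(0,t)\geq0$ handle the axis in the viscosity sense. The only organizational difference is that you separate variables first and reduce to the stationary inequality $-\kappa\Delta_{q,d}V+V/(2-q)\geq0$ (yielding the explicit threshold $\lambda^{2-q}\geq(2-q)\kappa(m\alpha)^{q-1}\max(d,2m)$), whereas the paper computes $\partial_t w$, $w'$, $w''$ together in $(r,t)$ and estimates the spatial term by an unspecified $C(n,p,q)$ before choosing $\lambda$ large; the paper also normalizes to $R=1$ and scales at the end, which you mention as an alternative.
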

	\begin{proof}
		Let us first consider the case $R=1.$
		
		\textbf{(Step 1)} For $(r,t)\in[0,1)\times(0,\infty)$, we set
		\[
		w(r,t):=\lambda t^{\frac{1}{2-q}}(1-r^{\frac{q}{q-1}})^{\frac{q}{q-2}},
		\]
		where $\lambda=\lambda(n,p,q)$ is a large constant to be chosen later.
		We show that $w$ satisfies
		\begin{equation}
		\partial_{t}w-\left|w^{\prime}\right|^{q-2}\left((p-1)w^{\prime\prime}+\frac{n-1}{r}w^{\prime}\right)\geq0\quad\text{in }(0,1)\times(0,\infty).\label{eq:explicit super 1}
		\end{equation}
		We have
		\begin{align*}
		\label{eq:explicit super 2}
		\partial_{t}w(r,t) & =\lambda\frac{1}{2-q}t^{\frac{1}{2-q}-1}(1-r^{\frac{q}{q-1}})^{\frac{q}{q-2}},\\
		w^{\prime}(r,t) & =-\lambda\frac{q^{2}}{(q-1)(q-2)}\cdot t^{\frac{1}{2-q}}r^{\frac{q}{q-1}-1}(1-r^{\frac{q}{q-1}})^{\frac{q}{q-2}-1}\numberthis\\
		\end{align*}
		and
		\begin{equation}
		\begin{aligned}
		\label{eq:explicit super 3}
		w^{\prime\prime}(r,t) & =-\lambda\frac{q^{2}}{(q-1)(q-2)}\left(\frac{q}{q-1}-1\right)\cdot t^{\frac{1}{2-q}}r^{\frac{q}{q-1}-2}(1-r^{\frac{q}{q-1}})^{\frac{q}{q-2}-1}\\
		& \ \ \ +\lambda\frac{q^{3}}{(q-1)^{2}(q-2)}\left(\frac{q}{q-2}-1\right)\cdot t^{\frac{1}{2-q}}r^{2(\frac{q}{q-1}-1)}(1-r^{\frac{q}{q-1}})^{\frac{q}{q-2}-2}.
		\end{aligned}
		\end{equation}
		Thus by combining \eqref{eq:explicit super 2} and \eqref{eq:explicit super 3} we get
		\begin{align*}
		& (p-1)w^{\prime\prime}(r,t)+\frac{n-1}{r}w^{\prime}(r,t)\\
		& \ =-\lambda\frac{q^{2}(p-1)}{(q-1)^{2}(q-2)}t^{\frac{1}{2-q}}r^{\frac{q}{q-1}-2}(1-r^{\frac{q}{q-1}})^{\frac{q}{q-2}-1}\\
		& \ \ \ \
		+\lambda\frac{2q^{3}(p-1)}{(q-1)^{2}(q-2)^{2}}t^{\frac{1}{2-q}}r^{2(\frac{q}{q-1}-1)}(1-r^{\frac{q}{q-1}})^{\frac{q}{q-2}-2}\\
		& \ \ \ \ -\lambda\frac{q^{2}(n-1)}{(q-1)(q-2)}t^{\frac{1}{2-q}}r^{\frac{q}{q-1}-2}(1-r^{\frac{q}{q-1}})^{\frac{q}{q-2}-1}\\
		& \leq C(n,p,q)\lambda t^{\frac{1}{2-q}}r^{\frac{q}{q-1}-2}(1-r^{\frac{q}{q-1}})^{\frac{q}{q-2}-2}((1-r^{\frac{q}{q-1}})+r^{\frac{q}{q-1}}).\\
		& \leq C(n,p,q)\lambda t^{\frac{1}{2-q}}r^{\frac{q}{q-1}-2}(1-r^{\frac{q}{q-1}})^{\frac{q}{q-2}-2}.
		\end{align*}
		Combining this with the formula \eqref{eq:explicit super 2}, we obtain
		\begin{align*}
		& \left|w^{\prime}\right|^{q-2}((p-1)w^{\prime\prime}+\frac{n-1}{r}w^{\prime})\\
		& \ \leq C(n,p,q)\lambda^{q-2}t^{\frac{q-2}{2-q}}r^{(q-2)(\frac{q}{q-1}-1)}(1-r^{\frac{q}{q-1}})^{(q-2)(\frac{q}{q-2}-1)}\\
		& \ \ \ \ \ \ \ \ \ \ \ \ \ \ \ \cdot\lambda t^{\frac{1}{2-q}}r^{\frac{q}{q-1}-2}(1-r^{\frac{q}{q-1}})^{\frac{q}{q-2}-2}\\
		& \ =C(n,p,q)\lambda^{q-1}t^{\frac{1}{2-q}-1}(1-r^{\frac{q}{q-1}})^{\frac{q}{q-2}}
		\end{align*}
		where we used that $(q-2)(\frac{q}{q-1}-1)+(\frac{q}{q-1}-2)=\frac{q-2}{q-1}+\frac{2-q}{q-1}=0$
		and $(q-2)(\frac{q}{q-2}-1)=2$. Hence,
		\begin{align*}
		& \partial_{t}w-\left|w^{\prime}\right|^{q-2}((p-1)w^{\prime\prime}+\frac{n-1}{r}w^{\prime})\\
		& \ \geq\lambda\frac{1}{2-q}t^{\frac{1}{2-q}-1}(1-r^{\frac{q}{q-1}})^{\frac{q}{q-2}}-C(n,p,q)\lambda^{q-1}t^{\frac{1}{2-q}-1}(1-r^{\frac{q}{q-1}})^{\frac{q}{q-2}}\\
		& \ =\lambda^{q-1}t^{\frac{1}{2-q}-1}(1-r^{\frac{q}{q-1}})^{\frac{q}{q-2}}\left(\frac{\lambda^{2-q}}{2-q}-C(n,p,q)\right).
		\end{align*}
		By taking $\lambda=\lambda(n,p,q)$ large enough, the right-hand side
		of the above display can be made non-negative. This way we see that
		$w$ we satisfies (\ref{eq:explicit super 1}).
		
		\textbf{(Step 2)} We set
		\[
		v(x,t):=w(\left|x\right|,t)\quad\text{for all }(x,t)\in B_{1}\times(0,\infty).
		\]
		Suppose first that $(x,t)\in(B_{1}\setminus\left\{ 0\right\} )\times(0,\infty)$
		and denote $r=\left|x\right|$. Then we have 
		\begin{align*}
		\nabla v(x,t) & =\frac{x}{r}w^{\prime}(r,t),\\
		D^{2}v(x,t) & =\frac{x}{r}\otimes\frac{x}{r}w^{\prime\prime}(r,t)+\frac{1}{r}(I-\frac{x}{r}\otimes\frac{x}{r})w^{\prime}(r,t).
		\end{align*}
		Therefore, since $w$ satisfies (\ref{eq:explicit super 1}), we have
		\begin{align*}
		\partial_{t}v-\Delta_{p}^{q}v & =\partial_{t}v-\left|\nabla v\right|^{q-2}\Tr\left(D^2 v+(p-2)\frac{\nabla v \otimes \nabla v}{\left|\nabla v\right|^{2}}D^2v \right)\\
		& =\partial_{t}w-\left|w^{\prime}\right|^{q-2}((p-1)w^{\prime\prime}+\frac{(n
			-1)}{r}w^{\prime})\geq0.
		\end{align*}
		This means that $v$ is a classical supersolution in $(B_{1}\setminus\left\{ 0\right\} )\times(0,\infty)$.
		We still need to consider the set $\left\{ 0\right\} \times(0,\infty)$.
		Since $1<q<2$, it follows from the formulas \eqref{eq:explicit super 2} of $w^{\prime}$ and \eqref{eq:explicit super 3} of
		$w^{\prime\prime}$ that $v\in C^{2}(B_{1}\times(0,\infty))$ with
		$\nabla v(0,t)=0$ and $\partial_{t}v(0,t)\geq0$ for all $t>0$. Therefore,
		if $\vp\in C^{2}$ touches $v$ from below at $(0,t)$, we have
		$\nabla \vp(0,t)=\nabla v(0,t)=0$ and $\partial_{t}\vp(0,t)=\partial_{t}v(0,t)\geq0$,
		as required. Consequently $v$ is a viscosity supersolution in $B_{1}\times(0,\infty)$.
		
		\textbf{(Step 3)} It remains to consider $R>0$. Let $v$ be the viscosity
		supersolution to
		\[
		\partial_{t}v=\Delta_{p}^{q}v\quad\text{in }B_{1}\times(0,\infty)
		\]
		which we constructed in the previous steps. Set $\widetilde{v}(x,t):=v(R^{-1}x,R^{-q}t)$.
		Then for all $(x,t)\in B_{R}(0)\setminus\left\{ 0\right\} \times(0,\infty)$
		we have
		\[
		\partial_{t}\widetilde{v}(x,t)-\Delta_{p}^{q}\widetilde{v}(x,t)=R^{-q}v(R^{-1}x,R^{-q}t)-R^{-q}\Delta_{p}^{q}v(R^{-1}x,R^{-q}t)\geq0,
		\]
		so $\widetilde{v}$ is a viscosity supersolution in $B_{R}(0)\times(0,\infty)$.
		Moreover,
		\begin{align*}
		\widetilde{v}(x,t)=\lambda(R^{-q}t)^{\frac{1}{2-q}}\left(1-\left|R^{-1}x\right|^{\frac{q}{q-1}}\right)^{\frac{q}{q-2}} & =\lambda t^{\frac{1}{2-q}}(RR^{\frac{q}{1-q}}(R^{\frac{q}{q-1}}-\left|x\right|^{\frac{q}{q-1}}))^{\frac{q}{q-2}}\\
		& =\lambda t^{\frac{1}{2-q}}(R^{\frac{1}{1-q}}(R^{\frac{q}{q-1}}-\left|x\right|^{\frac{q}{q-1}}))^{\frac{q}{q-2}}
		\end{align*}
		as desired.
	\end{proof}
	\section{A parabolic Harnack's inequality}
	In this section we prove a both-sided version of parabolic Harnack's inequality for equation \eqref{eq:rgnppar} which is of independent interest and needed for our proof of Theorem \ref{thm:ellipticharnack}. The proof of the backwards estimate is an adaptation of Section 6.9.\ in \cite{Dibenedetto2012} apart from the non-emptyness of the set $\mathcal{U}_\alpha$ below, which we prove using the comparison principle and the explicit supersolution we constructed in Lemma \ref{lem:supersolution}. To this end, we need to reduce the waiting time in the forward Harnack inequality. This kind of reduction can be achieved by increasing the multiplier $\mu$, as made precise in the following proposition.
	
	\begin{proposition}
		\label{prop:waitingtime}
		Let $u\geq0$ be a viscosity solution to \eqref{eq:rgnppar} in $Q_{1}^{-}(1)$ and the range
		condition \eqref{eq:range} holds. Fix $(x_{0},t_{0})\in Q_{1}^{-}(1)$ such that $u(x_{0},t_{0})>0$.
		Let $c$ be as in Theorem \ref{thm:parharnack}. Then for any $\hat{c}\in(0,c)$
		there exists $\hat{\mu}=\hat{\mu}(n,p,q,\hat{c})$ such that
		\begin{equation}
		u(x_{0},t_{0})\leq\hat{\mu}\inf_{B_{r}(x_{0})}u(\cdot,t_{0}+\hat{\theta}r^{q}),\label{eq:claim}
		\end{equation}
		whenever $(x_{0},t_{0})+Q_{5r}(\hat{\theta})\subset Q_{1}^{-}(1)$,
		where $\hat{\theta}=\hat{c}u(x_{0},t_{0})^{2-q}$.
	\end{proposition}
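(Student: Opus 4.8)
The plan is to extract the statement from the forward Harnack inequality, Theorem~\ref{thm:parharnack}, by a space--time Harnack chain, trading the reduction of the constant $\hat c$ for a larger multiplier $\hat\mu$. Write $M:=u(x_0,t_0)>0$ and $t_1:=t_0+\hat\theta r^q=t_0+\hat cM^{2-q}r^q$. The first remark is that Theorem~\ref{thm:parharnack}, applied at $(x_0,t_0)$ with the \emph{reduced} radius $\rho:=(\hat c/c)^{1/q}r<r$, already delivers the estimate at the correct time, though only on the smaller ball: its hypothesis $(x_0,t_0)+Q_{4\rho}(cM^{2-q})\subset Q_1^-(1)$ is implied by that of the Proposition, since $4\rho<5r$ and $cM^{2-q}(4\rho)^q=4^q\hat cM^{2-q}r^q<5^q\hat cM^{2-q}r^q$, and its conclusion is precisely $u(x_0,t_0)\le\mu\inf_{B_\rho(x_0)}u(\cdot,t_1)$. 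Letting $\rho'$ run over $(0,\rho]$ one obtains, more generally, $u\ge M/\mu$ on the whole paraboloid $\{(x,t):t_0<t\le t_1,\ |x-x_0|\le((t-t_0)/(cM^{2-q}))^{1/q}\}$. Thus the real task is to widen ``$u\ge M/\mu$ on $B_\rho(x_0)$'' to ``$u\ge M/\hat\mu$ on $B_r(x_0)$'' at the fixed level $t=t_1$.

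For this I would iterate Theorem~\ref{thm:parharnack} along a chain from $(x_0,t_0)$ toward an arbitrary $y\in B_r(x_0)$. One jump of spatial scale $r$ from a point of height $M$ reaches the time $t_0+cM^{2-q}r^q$, which overshoots $t_1$; but every application divides the height by $\mu$, and because $2-q>0$ the time $c(\mathrm{height})^{2-q}(\mathrm{scale})^q$ consumed by a jump shrinks as the height drops, so a long chain of small jumps can span the distance $r$ within the shorter total time $\hat cM^{2-q}r^q$. Concretely, a chain of $k$ jumps with nominal heights $M\mu^{-i}$ and scales $a_i$ must satisfy $\sum_{i<k}cM^{2-q}\mu^{-i(2-q)}a_i^q=\hat cM^{2-q}r^q$ and $\sum_{i<k}a_i\ge r$; choosing the $a_i$ as a suitable geometric sequence one verifies that both can be arranged once $k=k(n,p,q,\hat c)$ is large enough, the key being that the minimal ``time cost'' of a chain covering distance $r$ tends to $0$ as $k\to\infty$. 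This yields $u(y,t_1)\ge M\mu^{-k}$, hence $\hat\mu:=\mu^{k}$ works, and the slack between the $5r$ of the hypothesis and the $4r$ needed per step keeps all intermediate cylinders inside $Q_1^-(1)$.

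The hard part is that Theorem~\ref{thm:parharnack} fixes the time consumed by the $i$-th jump via the \emph{actual} value of $u$ at the current point, which it only bounds below by $M\mu^{-i}$; should that value be much larger there, the $i$-th spatial step is too short and the chain may miss $B_r(x_0)$ at the level $t_1$. I would overcome this by a comparison argument based on the comparison principle, Theorem~\ref{thm:comp}: either first establish an a priori bound $u\le C(n,p,q)M$ on the bounded space--time region swept by the chain, by comparing $u$ with a translated, dilated and vertically shifted copy of the explicit supersolution of Lemma~\ref{lem:supersolution} (which is finite in the interior and equals $+\infty$ on the lateral boundary), and then run the chain through points where $u$ is comparable to its nominal height; or else bypass the chaining in the second step altogether, comparing $u$ from below directly with an explicit Barenblatt-type subsolution obtained from the equivalence result, whose initial bump lies inside the paraboloid just below $t_1$ and which, by time $t_1$, has spread to be comparable to $M$ on $B_r(x_0)$ while vanishing on a concentric sphere. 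Carrying out one of these comparisons, together with the quantitative choice of $k$ and of the step scales, is where the bulk of the work lies.
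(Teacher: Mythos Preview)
Your strategy differs substantially from the paper's, and the paper's argument is considerably shorter. Rather than a Harnack chain, the paper applies Theorem~\ref{thm:parharnack} \emph{once}, at a point chosen by the intermediate value theorem. Set $\tilde{\mu}:=(c/\hat c)^{1/(2-q)}$ and $\hat\mu:=\mu\tilde\mu$. Fix $\hat x\in B_r(x_0)$ and $\hat t=t_0+\hat c\,u(x_0,t_0)^{2-q}r^q$. If $u(x_0,t_0)\le\tilde\mu\,u(\hat x,\hat t)$ there is nothing to do; otherwise, by continuity there is a point $(x_1,t_1)$ on the straight segment from $(x_0,t_0)$ to $(\hat x,\hat t)$ with $u(x_1,t_1)=u(x_0,t_0)/\tilde\mu$. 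One then sets $\rho$ so that $t_1+c\,u(x_1,t_1)^{2-q}\rho^q=\hat t$, and the identity $\tilde\mu^{2-q}=c/\hat c$ forces $\hat x\in B_\rho(x_1)$ (a short computation using that $(x_1,t_1)$ lies on the segment). A single application of Theorem~\ref{thm:parharnack} at $(x_1,t_1)$ with radius $\rho$ then gives $u(x_1,t_1)\le\mu\,u(\hat x,\hat t)$, i.e.\ $u(x_0,t_0)\le\mu\tilde\mu\,u(\hat x,\hat t)$. The point is that by \emph{prescribing} the value of $u$ at the intermediate point, the intrinsic waiting time from there is known exactly, which is precisely the obstacle you ran into.

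Your chain approach may be salvageable, but as written it has a gap. You correctly identify the difficulty that the $i$-th step length is governed by the actual value $u(y_i,s_i)$, not the nominal lower bound $M\mu^{-i}$. Your first proposed fix---bounding $u$ from above on the chain region by comparison with the supersolution of Lemma~\ref{lem:supersolution}---does not go through: to run that comparison you would need $u\le C(n,p,q)M$ on the \emph{bottom} of the comparison cylinder, and nothing in the hypotheses gives such a bound (the backward estimate that would supply it is Theorem~\ref{thm:backharnack}, whose proof \emph{uses} the present proposition). Your second fix, comparison from below with a Barenblatt-type subsolution, is more plausible but requires constructing an explicit subsolution with quantitative spreading that is not provided in the paper; carrying this out would amount to reproving a variant of Theorem~\ref{thm:parharnack} from scratch. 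The intermediate-value-theorem trick above sidesteps all of this.
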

	
	We postpone the proof of Proposition \ref{prop:waitingtime} to the end of this section and consider the both-sided Harnack inequality next.
	
	\begin{theorem}
		\label{thm:backharnack}
		Let $u \geq 0$ be a viscosity solution to \eqref{eq:rgnppar} in $Q_{1}^{-}(1)$ and the range condition \eqref{eq:range} holds. Fix $\left(x_{0}, t_{0}\right) \in Q_{1}^{-}(1)$. Then there exist $\mu=\mu(n, p, q)$, ${c}={c}(n, p, q)$ and ${\alpha=\alpha(n,p,q)\in(0,1)}$ such that
		\begin{equation*}
		\mu^{-1}\sup_{B_r(x_0)}u(\cdot,t_0-\theta r^{q})\leq u(x_0,t_0)\leq\mu\inf_{B_{r}(x_0)}u(\cdot,t_0+\theta r^{q})
		\end{equation*}
		where
		\begin{equation*}
		\theta={c}u\left(x_{0}, t_{0}\right)^{2-q},
		\end{equation*}
		whenever $(x_0,t_0)+Q_{\frac{6}{\alpha}r}(\theta) \subset Q_{1}^{-}(1)$.
	\end{theorem}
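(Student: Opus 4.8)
\emph{Strategy.} First, the right-hand inequality is essentially free: it is the forward Harnack inequality of Theorem~\ref{thm:parharnack} applied at $(x_0,t_0)$ — since $\alpha\in(0,1)$ one has $6/\alpha>4$, so the hypothesis $(x_0,t_0)+Q_{6r/\alpha}(\theta)\subset Q_1^-(1)$ gives $(x_0,t_0)+Q_{4r}(\theta)\subset Q_1^-(1)$, while the positivity assumption of Theorem~\ref{thm:parharnack} is unnecessary in the present range by Remark~\ref{re:positivity} (and if $c$ has been taken strictly smaller than the constant in Theorem~\ref{thm:parharnack}, one invokes Proposition~\ref{prop:waitingtime} instead, at the cost of enlarging $\mu$). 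I may therefore assume $M:=u(x_0,t_0)>0$: the case $M=0$ follows afterwards by applying the already established inequality to $u+\eps$ (a solution, since constants solve \eqref{eq:rgnppar}) and letting $\eps\to0$, for then $\theta=c\eps^{2-q}\to0$, the earlier time level tends to $t_0$, and continuity of $u$ forces $\sup_{B_r(x_0)}u(\cdot,t_0)=0$. With $M>0$ and $\theta:=cM^{2-q}$ fixed, the whole content is the backward estimate $\sup_{B_r(x_0)}u(\cdot,t_0-\theta r^q)\le\mu M$.

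\emph{Backward estimate.} I would follow the scheme of \cite[Section~6.9]{Dibenedetto2012} closely. It is a continuation-type argument run over a family of admissible ``source'' configurations $\mathcal U_\alpha$, indexed by a structural parameter $\alpha\in(0,1)$: loosely, points $(y,s)$ with $s<t_0$, with $u(y,s)$ comparable to the quantity one wants to bound, and from which a finite chain of forward Harnack applications can be pushed — staying inside the working cylinder and without overshooting the time level $t_0$ — to a configuration lying over $x_0$ at time $t_0$; the forward Harnack inequality then carries the (large) value along the chain, which, weighed against $u(x_0,t_0)=M$, forces the original supremum to be $\le\mu M$. Making the chain fit forces each forward Harnack cylinder to be short in time, which is why one first passes to Proposition~\ref{prop:waitingtime}: choosing the waiting-time constant $\hat c=\hat c(n,p,q)$ small enough keeps the whole chain inside $(x_0,t_0)+Q_{6r/\alpha}(\theta)$ and landing at a time $\le t_0$, and it is precisely this that enlarges the cylinder $Q_{4r}(\theta)$ of Theorem~\ref{thm:parharnack} to $Q_{6r/\alpha}(\theta)$. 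Every inequality borrowed from \cite[Section~6.9]{Dibenedetto2012} along the way is either the comparison principle or the intrinsic Harnack inequality, both at our disposal (Theorems~\ref{thm:comp} and \ref{thm:parharnack}), so this part of the transcription is mechanical — \emph{once one knows $\mathcal U_\alpha\ne\emptyset$}.

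\emph{Non-emptiness of $\mathcal U_\alpha$.} This is the only step that does not transcribe, because in \cite{Dibenedetto2012} it rests on energy and measure-propagation estimates unavailable for the non-divergence equation \eqref{eq:rgnppar}. I would supply it via comparison with the explicit supersolution $v$ of Lemma~\ref{lem:supersolution}. Given a ball and a time with $B_R(z)\times(\tau_1,t_0]\subset Q_1^-(1)$, the function $(x,t)\mapsto v(x-z,t-\tau_1)+\sup_{B_R(z)}u(\cdot,\tau_1)$ is a supersolution on $B_R(z)\times(\tau_1,\infty)$ (translation invariance of the equation, plus constants are solutions) that equals $+\infty$ on the lateral boundary and $\sup_{B_R(z)}u(\cdot,\tau_1)$ on the bottom; by Theorem~\ref{thm:comp} it dominates $u$, and evaluating at the centre $x=z$ gives
\begin{equation*}
\sup_{B_R(z)}u(\cdot,\tau_1)\ \ge\ u(z,t)\ -\ \lambda\,(t-\tau_1)^{\frac1{2-q}}R^{-\frac{q}{2-q}},\qquad t>\tau_1.
\end{equation*}
With $z=x_0$, $t=t_0$, $R\simeq r$ and $\tau_1=t_0-\tau$ for $\tau$ comparable to $M^{2-q}R^q$ with a small structural factor making the subtracted term $\le M/2$, this yields a point of $B_R(x_0)$ at time $t_0-\tau$ where $u\ge M/2$, i.e.\ a legitimate element of $\mathcal U_{1/2}$; the same computation at the intermediate centres and times met in the chain keeps $\mathcal U_\alpha$ populated. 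In a phrase: a non-negative solution cannot be large at $(x_0,t_0)$ while staying uniformly small on a whole ball a definite intrinsic amount of time earlier, and this is what takes the place of the missing energy estimates.

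\emph{Expected obstacle.} The real work should be concentrated here, together with the bookkeeping it entails: tracking the (fixed) factor by which the value drops per forward Harnack step, hence bounding the number of steps; estimating how far the chain can progress in space under the time budget $\theta r^q$; and calibrating $\hat c$ and the ratio defining $\alpha$ so that the chain genuinely reaches $x_0$ by time $t_0$ instead of overshooting. The continuation argument itself, and the deduction of the backward bound from $\mathcal U_\alpha\ne\emptyset$, are then routine adaptations of \cite[Section~6.9]{Dibenedetto2012}.
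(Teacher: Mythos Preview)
Your high-level plan matches the paper's: the right inequality is forward Harnack (via Proposition~\ref{prop:waitingtime} once $c$ has been shrunk), the left inequality adapts \cite[Section~6.9]{Dibenedetto2012}, and the only step that does not transfer is the non-emptiness of $\mathcal{U}_\alpha$, which must be supplied via the barrier of Lemma~\ref{lem:supersolution} together with comparison. But your description of $\mathcal{U}_\alpha$ is inaccurate, and your barrier argument for its non-emptiness proves the wrong statement.

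Concretely, in the argument being adapted one fixes the single earlier time $\overline{t}=t_0-\hat{c}M^{2-q}\rho^q$ and sets
\[
\mathcal{U}_\alpha=\bigl\{x\in B_{\alpha\rho}(x_0):u(x,\overline{t})\leq\hat{\mu}M\bigr\},
\]
the set where the past value is \emph{small}. The ``continuation'' is not a chain: one shows $\mathcal{U}_\alpha$ is relatively open in $B_{\alpha\rho}(x_0)$ by a \emph{single} application of Proposition~\ref{prop:waitingtime} from each nearby $(y,\overline{t})$ to $(x_0,t_0)$ (the choice $\alpha=(2\hat{\mu})^{(q-2)/q}$ is exactly what makes the intrinsic paraboloid from $(y,\overline{t})$ contain $(x_0,t_0)$ once $u(y,\overline{t})\leq 2\hat{\mu}M$). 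Since the complement is also open and $B_{\alpha\rho}(x_0)$ is connected, non-emptiness forces $\mathcal{U}_\alpha=B_{\alpha\rho}(x_0)$, which is the backward bound.

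Your comparison with the supersolution $v(\cdot-x_0,\cdot-\tau_1)+\sup_{B_R(x_0)}u(\cdot,\tau_1)$ from above yields $\sup_{B_R(x_0)}u(\cdot,\tau_1)\geq M-\lambda(\ldots)$, i.e.\ a point at the earlier time where $u\geq M/2$. That exhibits a point where $u$ is \emph{large}; it says nothing about $\mathcal{U}_\alpha$, which asks for a point where $u\leq\hat{\mu}M$, and it does not feed the openness step either (that step needs an \emph{upper} bound on $u(y,\overline{t})$ so that the intrinsic radius reaching time $t_0$ is large enough to cover $x_0$). The correct move flips the comparison. Assuming $\mathcal{U}_\alpha=\emptyset$ gives $m:=\inf_{B_{\alpha\rho}(x_0)}u(\cdot,\overline{t})\geq\hat{\mu}M$; then
\[
w(x,t):=m-v(x-x_0,t-\overline{t})
\]
is a viscosity \emph{subsolution} on $B_{\alpha\rho}(x_0)\times(\overline{t},\infty)$ with $w\equiv m\leq u$ on the bottom and $w\to-\infty$ laterally, so Theorem~\ref{thm:comp} gives $u\geq w$ there. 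Evaluating at $(x_0,t_0)$ yields
\[
M\;\geq\; m-\lambda\,\hat{c}^{\frac{1}{2-q}}\alpha^{-\frac{q}{2-q}}M\;=\;\bigl(1-2\lambda\,\hat{c}^{\frac{1}{2-q}}\bigr)\hat{\mu}M,
\]
which for $\hat{\mu}>2$ and $\hat{c}$ small enough exceeds $M$---the desired contradiction. This is precisely how the paper closes the gap; with your comparison reversed the rest of your outline goes through.
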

	\begin{proof}
		Without loss of generality, we may assume $u(x_0,t_0)>0$ as stated in Remark \ref{re:positivity}. Let $\hat{c}$ be a small positive constant to be chosen later.
		 For this $\hat{c}$, let $\hat{\mu} > 2$ be given by Proposition \ref{prop:waitingtime}. Let $\rho$ be a radius such that $(x_0,t_0)+Q_{6\rho}(\hat{\theta}) \subset Q_{1}^{-}(1)$, $\hat{\theta} = \hat{c}u(x_0, t_0)^{2-q}$, and let
		\begin{equation}
		\overline{t}=t_0-{\hat{c}}u(x_0,t_0)^{2-q}\rho^q.
		\end{equation}
		Let $\alpha\in(0,1)$ be a constant to be chosen later and define the sets
		\begin{equation}
		\mathcal{U}_\alpha=B_{\alpha \rho}(x_0)\cap \{x\in \overline{B}_{\alpha \rho}(x_0)\mid u(x,\overline{t})\leq \hat{\mu} u(x_0,t_0)\}=:B_{\alpha \rho}(x_0)\cap D.
		\end{equation}
		We will first show that $\alpha$ can be chosen to make $\mathcal{U}_\alpha$ open.
		Assume that $\mathcal{U}_\alpha$ is not empty and fix $z\in\mathcal{U}_\alpha$. Since $u$ is continuous, we can choose a radius $\eps$ such that $B_\eps(z)\subset B_{\alpha \rho}(x_0)$ and
		\begin{equation}
		\label{eq:backharnack1}
		u(y,\overline{t})\leq2 \hat{\mu} u(x_0,t_0) \quad \text{ for all }y\in B_\eps(z).
		\end{equation}
		For each $y\in B_\eps(z)$, construct the intrinsic $q$-paraboloid
		\begin{equation*}
		\mathcal{P}(y,\overline{t})=\{(x,t)\in Q_{1}^{-}(1)\mid t-\overline{t}\geq \hat{c}u(y,\overline{t})^{2-q}\abs{x-y}^q\}.
		\end{equation*}

		Selecting
		\begin{equation}
		\label{eq:alpha_choice}
		\alpha:=\left(2\hat{\mu}\right)^{\frac{q-2}{q}},
		\end{equation}
		we have  $(x_0, t_0) \in \mathcal{P}(y, \overline{t})$ whenever  $y \in B_\varepsilon (z)$, since using \eqref{eq:backharnack1} we can estimate

		\begin{align*}
		\hat{c}u(y,\overline{t})^{2-q}\left|y-x_{0}\right|^{q}\leq \hat{c}(2\hat{\mu})^{2-q}u(x_{0},t_{0})^{2-q}\left|y-x_{0}\right|^{q} & \leq \hat{c}(2\hat{\mu})^{2-q}u(x_{0},t_{0})^{2-q}(\alpha\rho)^{q}\\
		 & \leq \hat{c}u(x_{0},t_{0})^{2-q}\rho^{q}=t_{0}-\overline{t}.
		\end{align*}

		Assume for a moment that $u(y,\overline{t}) \geq 2u(x_0, t_0)$ and pick a radius 
		\begin{equation*}
		\hat{\rho}=\frac{u(x_0,t_0)^{\frac{2-q}{q}}}{u(y,\overline{t})^{\frac{2-q}{q}}}\rho
		\end{equation*} so that
		\begin{equation*}
		\overline{t}+\hat{c}u\left(y, \overline{t}\right)^{2-q}\hat{\rho}^{q}=\overline{t}+\hat{c}u\left(x_0, t_0\right)^{2-q}\rho^{q}=t_0.
		\end{equation*}
		Thus by Proposition \ref{prop:waitingtime} we have
		\begin{equation}
		\label{eq:backharnack2}
		u\left(y, \overline{t}\right) \leq \hat{\mu} \inf_{B_{\hat{\rho}}(y)} u\left(\cdot, \overline{t}+\hat{c}u\left(y, \overline{t}\right)^{2-q}\hat{\rho}^{q}\right) = \hat{\mu} \inf_{B_{\hat{\rho}}(y)} u\left(\cdot, t_0\right)\leq \hat{\mu} u(x_0,t_0),
		\end{equation}
		where the last inequality holds because from $(x_0,t_0)\in\mathcal{P}(y,\overline{t})$, it follows
		\begin{equation*}
		\abs{x_0-y}^q\le\frac{t_0-\overline{t}}{\hat{c}u(y,\overline{t})^{2-q}}=\frac{\hat{c}u(x_0,t_0)^{2-q}\rho^q}{\hat{c}u(y,\overline{t})^{2-q}}=\hat{\rho}^q.
		\end{equation*}
		The use of Proposition \ref{prop:waitingtime} here is justified since $B_{5\hat{\rho}}(y)\subset B_{6\rho}(x_0)$ because
		\begin{equation*}
		5\hat{\rho}+\rho= 5\frac{u(x_0,t_0)^{\frac{2-q}{q}}}{u(y,\overline{t})^{\frac{2-q}{q}}}\rho+\rho\leq5\left(\frac{u(x_0,t_0)}{2u(x_0, t_0)}\right)^{\frac{2-q}{q}}\rho+\rho\leq6\rho,
		\end{equation*}
		where we use our assumption $u(y,\overline{t}) \geq 2u(x_0, t_0)$ and $q<2$. In the time direction it holds
		\begin{align*}
			\overline{t}-\hat{c}u(y,\overline{t})^{2-q}(5\hat{\rho})^q&=\overline{t}-\hat{c}u(y,\overline{t})^{2-q}\left(\frac{u(x_0,t_0)}{u(y,\overline{t})}\right)^{2-q}(5\rho)^q\\
			&=t_0-\hat{c}u(x_0,t_0)^{2-q}\rho^q-\hat{c}u(x_0,t_0)^{2-q}(5\rho)^q\\
			&=t_0-(1+5^q)\hat{\theta}\rho^q>t_0-\hat{\theta}(6\rho)^q
		\end{align*}
		and thus there is enough room to use the proposition. The last inequality holds because $q>1$.
		
		If $u(y,\overline{t})<2u(x_0,t _0)$, then \eqref{eq:backharnack2} holds automatically since $\hat{\mu} \geq 2$.
		We can get inequality \eqref{eq:backharnack2} for any $y\in B_{\eps}(z)$ and thus $B_{\eps}(z)\subset\mathcal{U}_\alpha$ for a radius $\eps$ only depending on $z$. This can be repeated for any $z\in\mathcal{U}_\alpha$ and thus the set $\mathcal{U}_\alpha$ has to be open.
		\par
		We still need to show that $\mathcal{U}_\alpha\not=\emptyset$. If we assume thriving for a contradiction that $\mathcal{U}_\alpha=\emptyset$, then
		\begin{equation}
			\label{eq:backharnack3}
			m:=\inf_{B_{\alpha\rho}(x_0)}u(\cdot,\overline{t})\geq\hat{\mu} u(x_0,t_0).
		\end{equation}
		Consider the function
		\begin{equation*}
			w(x,t):=-\lambda(t-\overline{t})^{\frac{1}{2-q}}\left(\frac{1}{(\alpha\rho)^{\frac{1}{1-q}}\left((\alpha\rho)^{\frac{q}{q-1}}-\abs{x-x_0}^{\frac{q}{q-1}}\right)}\right)^{\frac{q}{2-q}}+m.
		\end{equation*}
		By Lemma \ref{lem:supersolution}, $w$ is a viscosity subsolution to \eqref{eq:rgnppar} in $B_{\alpha\rho}(x_0)\times(\overline{t},\infty)$ and satisfies
		\begin{equation*}
			\begin{cases}
				w(x,\overline{t})\equiv m \leq u(x,\overline{t}) & \text{for all }x\in B_{\alpha\rho}(x_0),\\
				\lim\limits_{\Omega_{T} \ni(x,t)\to(y,s)}w(x,t)=-\infty & \text{for all }(y,s)\in\partial B_{\alpha\rho}(x_0)\times(\overline{t},\infty).\\
			\end{cases}
		\end{equation*}
	Thus by comparison principle Theorem \ref{thm:comp}, we have $u\geq w$ in $B_{\alpha\rho}(x_0)\times[\overline{t},\infty)$, so in particular we have
	\begin{align*}
		u(x_0,t_0)&\geq w(x_0,t_0)\\
		&=-\lambda\left(t_0-t_0+\hat{c}u(x_0,t_0)^{2-q}\rho^q\right)^{\frac{1}{2-q}}\left(\frac{1}{(\alpha\rho)^{\frac{1}{1-q}}\left((\alpha\rho)^{\frac{q}{q-1}}-0\right)}\right)^{\frac{q}{2-q}}+m \\
		&=-\lambda \hat{c}^{\frac{1}{2-q}}\rho^{\frac{q}{2-q}}(\alpha\rho)^{-\frac{q}{2-q}}u(x_0,t_0)+m\\
		&\geq-\lambda \hat{c}^{\frac{1}{2-q}}\alpha^{-\frac{q}{2-q}}u(x_0,t_0)+\hat{\mu} u(x_0,t_0)\\
		&=\left(-2\lambda \hat{c}^{\frac{1}{2-q}}+1\right)\hat{\mu} u(x_0,t_0)\\
		&> 2\left(-2\lambda \hat{c}^{\frac{1}{2-q}}+1\right) u(x_0,t_0),
	\end{align*}

	where the last two inequalities follow from our assumption \eqref{eq:backharnack3} and that $\hat{\mu} > 2$. By taking  $\hat{c}$ to be a small enough constant depending only on $p$, $q$ and $n$, we can ensure that the coefficient of $u(x_0,t_0)$ at the right-hand side is larger than $1$, yielding a contradiction. Thus the set $\mathcal{U}_\alpha$ cannot be empty. 
	
	We have shown that the set $\mathcal{U}_\alpha=B_{\alpha \rho}(x_0)\cap D$ is open and non-empty. Because $u$ is continuous, the set $D$ is closed and thus for our $\alpha$, we must have $B_{\alpha \rho}(x_0) \subset D$ and thus by definition of the set
		\begin{equation*}
		\sup_{B_{\alpha \rho}(x_0)}u(\cdot,\overline{t})\leq\hat{\mu} u(x_0,t_0).
		\end{equation*}
	Combining this with the right side of the Harnack's inequality Proposition \ref{prop:waitingtime}, we obtain
		\begin{equation*}
		\hat{\mu}^{-1}\sup_{B_{\alpha \rho}(x_0)}u(\cdot,t_0-\hat{c}u(x_0,t_0)^{2-q}\rho^q)\leq u(x_0,t_0)\leq\hat{\mu}\inf_{B_{\rho}(x_0)}u(\cdot,t_0+\hat{c}u(x_0,t_0)^{2-q}\rho^q)
		\end{equation*}
	for the specific $\alpha$ chosen in \eqref{eq:alpha_choice}. If we let $\tilde{{c}}=\alpha^{-q}\hat{c}$ and $r=\alpha \rho$, we have
		\begin{align*}
		\hat{\mu}^{-1}\sup_{B_{r}(x_0)}u(\cdot,t_0-\tilde{{c}} u(x_0,t_0)^{2-q}r^q)\leq u(x_0,t_0)&\leq\hat{\mu}\inf_{B_{\rho}(x_0)}u(\cdot,t_0+\tilde{{c}} u(x_0,t_0)^{2-q}r^q)
		\\&\leq\hat{\mu}\inf_{B_{r}(x_0)}u(\cdot,t_0+\tilde{{c}} u(x_0,t_0)^{2-q}r^q)
		\end{align*}
		which is what we wanted. The condition $(x_0,t_0)+Q_{6\rho}(\hat{\theta}) \subset Q_{1}^{-}(1)$ becomes the stated $(x_0,t_0)+Q_{\frac{6}{\alpha}r}(\theta) \subset Q_{1}^{-}(1)$.
	\end{proof}

	We conclude this section with the proof of Proposition \ref{prop:waitingtime}. 
	
	\begin{proof}[Proof of Proposition \ref{prop:waitingtime}]
		As discussed in Remark \ref{re:positivity}, we may assume that $u(x_{0},t_{0})>0$.
		Let $\mu>1$ and $c$ be the constants in Theorem \ref{thm:parharnack} and let $\hat{c}<c$.
		We prove (\ref{eq:claim}) for $\hat{\mu}:=\mu\tilde{\mu}$, where $\tilde{\mu}:=(c/\hat{c})^{\frac{1}{2-q}}$. 
		Denote $\hat{t}:=t_{0}+\hat{c}u(x_{0},t_{0})^{2-q}r^{q}$ and let
		$\hat{x}\in B_{r}(x_{0})$ be an arbitrary point. It now suffices
		to prove that
		\begin{equation}
		u(x_{0},t_{0})\leq\tilde{\mu}\mu u(\hat{x},\hat{t}).\label{eq:claim1}
		\end{equation}
		To this end, we may suppose that $u(x_{0},t_{0})>\tilde{\mu}u(\hat{x},\hat{t})$
		because otherwise
		\[
		u(\hat{x},\hat{t})\geq\frac{1}{\tilde{\mu}}u(x_{0},t_{0})>\frac{1}{\tilde{\mu}\mu}u(x_{0},t_{0}),
		\]
		which would already imply (\ref{eq:claim1}). Let $[(x_{0},t_{0}),(\hat{x},\hat{t})]$
		be a segment from $(x_{0},t_{0})$ to $(\hat{x},\hat{t})$, i.e.
		\begin{align*}
		[(x_{0},t_{0}),(\hat{x},\hat{t})]:=\left\{ \left(x_{0}+l\frac{\hat{x}-x_{0}}{\left|\hat{x}-x_{0}\right|},t_{0}+l\kappa\right)\mid l\in[0,\left|\hat{x}-x_{0}\right|]\right\} ,\quad & \kappa:=\frac{\hat{t}-t_{0}}{\left|\hat{x}-x_{0}\right|}.
		\end{align*}
		We have
		\[
		u(\hat{x},\hat{t})<\frac{1}{\tilde{\mu}}u(x_{0},t_{0})<u(x_{0},t_{0}).
		\]
		Thus by continuity there exists $(x_{1},t_{1})\in[(x_{0},t_{0}),(\hat{x},\hat{t})]\setminus\left\{ (x_{0},t_{0}),(\hat{x},\hat{t})\right\} $
		such that
		\begin{equation}
		u(x_{1},t_{1})=\frac{1}{\tilde{\mu}}u(x_{0},t_{0}).\label{eq:somethings 1}
		\end{equation}
		Moreover, since $(x_{1},t_{1})$ lies on the segment, there is $l_{1}\in(0,\left|\hat{x}-x_{0}\right|)$
		such that
		\[
		(x_{1},t_{1})=\left(x_{0}+l_{1}\frac{\hat{x}-x_{0}}{\left|\hat{x}-x_{0}\right|},t_{0}+l_{1}\kappa\right).
		\]
		We now have
		\begin{align}
		\left|x_{1}-\hat{x}\right| & =\left|x_{0}+l_{1}\frac{\hat{x}-x_{0}}{\left|\hat{x}-x_{0}\right|}-x_{0}-\left|\hat{x}-x_{0}\right|\frac{\hat{x}-x_{0}}{\left|\hat{x}-x_{0}\right|}\right|=(\left|\hat{x}-x_{0}\right|-l_{1})\nonumber \\
		& =\left(\frac{\hat{t}-t_{0}}{\kappa}-\frac{t_{1}-t_{0}}{\kappa}\right)=\frac{\hat{t}-t_{1}}{\kappa}.\label{eq:est1}
		\end{align}
		We set
		\[
		\rho:=\left(\frac{\hat{t}-t_{1}}{cu(x_{1},t_{1})^{2-q}}\right)^{\frac{1}{q}}
		\]
		because then, since $\kappa=(\hat{t}-t_{0})/\left|\hat{x}-x_{0}\right|$,
		we obtain using (\ref{eq:somethings 1})
		\begin{align}
		\frac{\hat{t}-t_{1}}{\kappa} & =\rho\frac{(\hat{t}-t_{1})^{1-\frac{1}{q}}(cu(x_{1},t_{1})^{2-q})^{\frac{1}{q}}}{\kappa}\nonumber \\
		& =\rho\left|\hat{x}-x_{0}\right|\frac{(\hat{t}-t_{1})^{1-\frac{1}{q}}(cu(x_{1},t_{1})^{2-q})^{\frac{1}{q}}}{(\hat{t}-t_{0})}\nonumber \\
		& <\rho r\left(\frac{cu(x_{1},t_{1})^{2-q}}{\hat{t}-t_{0}}\right)^{\frac{1}{q}}\nonumber \\
		& =\rho r\left(\frac{cu(x_{1},t_{1})^{2-q}}{\hat{c}u(x_{0},t_{0})^{2-q}r^{q}}\right)^{^{\frac{1}{q}}}\nonumber \\
		& =\rho\left(\frac{c}{\hat{c}\tilde{\mu}^{2-q}}\right)^{\frac{1}{q}}=\rho.\label{eq:est2}
		\end{align}
		Combining (\ref{eq:est1}) and (\ref{eq:est2}) we see that $\hat{x}\in B_{\rho}(x_{1})$.
		Moreover, by definition of $\rho$ we have $t_{1}+cu(x_{1},t_{1})^{2-q}\rho^{q}=\hat{t}$.
		Consequently, assuming for the moment that we have enough space to
		apply Theorem \ref{thm:parharnack} at $(x_{1},t_{1})$ for radius $\rho$, we obtain
		\[
		u(x_{1},t_{1})\leq\mu\inf_{B_{\rho}(x_{1})}u(\cdot,t_{1}+cu(x_{1},t_{1})^{2-q}\rho^{q})\leq\mu u(\hat{x},\hat{t}).
		\]
		Hence by (\ref{eq:somethings 1}) 
		\[
		u(\hat{x},\hat{t})\geq\frac{1}{\mu}u(x_{1},t_{1})=\frac{1}{\mu\tilde{\mu}}u(x_{0},t_{0}),
		\]
		as desired. 
		
		Since we use Theorem \ref{thm:parharnack} at $(x_{1},t_{1})$, $t_{1}>t_{0}$, we only
		need to check that the upper boundary of the cylinder $(x_{1},t_{1})+Q_{4\rho}(\theta)$
		is within the domain of the solution. First, by (\ref{eq:somethings 1})
		we have
		\begin{align*}
		\left|x_{0}-x_{1}\right|+4\rho & \leq r+4\left(\frac{\hat{t}-t_{1}}{cu(x_{1},t_{1})^{2-q}}\right)^{\frac{1}{q}}\leq r+4\left(\frac{\hat{t}-t_{0}}{cu(x_{1},t_{1})^{2-q}}\right)^{\frac{1}{q}}\\
		& =r+4\left(\frac{\hat{c}u(x_{0},t_{0})^{2-q}r^{q}}{cu(x_{1},t_{1})^{2-q}}\right)^{\frac{1}{q}}=r+4\left(\frac{\hat{c}}{c}\tilde{\mu}^{2-q}\right)^{\frac{1}{q}}r=5r.
		\end{align*}
		Further,
		\begin{align*}
		t_{1}+cu(x_{1},t_{1})^{2-q}(4\rho)^{q} & =t_{1}+cu(x_{1},t_{1})^{2-q}4^{q}\left(\frac{\hat{t}-t_{1}}{cu(x_{1},t_{1})^{2-q}}\right)\\
		& =t_{1}+4^{q}(\hat{t}-t_{1})\\
		& =(4^{q}-1)(t_{0}-t_{1})+t_{0}+4^{q}\hat{c}u(x_{0},t_{0})^{2-q}r^{q}\\
		& \leq t_{0}+\hat{c}u(x_{0},t_{0})^{2-q}(5r)^{q}.
		\end{align*}
		Thus the upper boundary of $(x_{1},t_{1})+Q_{4\rho}(\theta)$ is contained
		in $(x_{0},t_{0})+Q_{5r}(\hat{\theta})\subset Q_{1}^{-}(1)$.
	\end{proof}

	\section{Proof of the elliptic Harnack's inequality}
	To prove Theorem \ref{thm:ellipticharnack} we first establish the following version where more space is required around the point $(x_0, t_0)$. To prove this proposition, we first use the parabolic Harnack Theorem \ref{thm:backharnack} to get an estimate at an earlier time level, use Lemma \ref{lem:supersolution} to construct a super solution with infinite boundary values at this level and finally use the comparison principle Theorem \ref{thm:comp} to get an estimate at our original time level. We repeat this process again around a local minimum of $u$ to get the other side of the inequality.
	\begin{proposition}\label{prop:elliptic harnack}\sloppy
		Let $u \geq 0$ be a viscosity solution to \eqref{eq:rgnppar} in $Q_{1}^{-}(1)$ and the range condition \eqref{eq:range} holds. Fix $\left(x_{0}, t_{0}\right) \in Q_{1}^{-}(1)$. Then there exist ${\bar{\gamma}=\bar{\gamma}(n, p, q)}$, ${c}={c}(n, p, q)$ and ${\alpha=\alpha(n,p,q)\in(0,1)}$ such that
		\begin{equation}
		\label{eq:ellipticharnack}
		\bar{\gamma}^{-1}\sup_{B_r(x_0)}u(\cdot,t_0)\leq u(x_0,t_0)\leq\bar{\gamma}\inf_{B_{r}(x_0)}u(\cdot,t_0),
		\end{equation}
		whenever $(x_0,t_0)+Q_{\frac{13}{\alpha}r}(\theta)\subset Q_{1}^{-}(1)$ where
		\begin{equation*}
		\theta={c}u\left(x_{0}, t_{0}\right)^{2-q}.
		\end{equation*}
	\end{proposition}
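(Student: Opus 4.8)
The plan is to derive both inequalities in \eqref{eq:ellipticharnack} from a single one-sided bound, used once at $(x_0,t_0)$ and once at a minimum point of $u(\cdot,t_0)$. Concretely, I would prove the following \emph{auxiliary bound}: if $(\xi,t_0)\in Q_1^-(1)$ satisfies $u(\xi,t_0)>0$ and $(\xi,t_0)+Q_{\frac{12}{\alpha}r}(c\,u(\xi,t_0)^{2-q})\subset Q_1^-(1)$, where $c,\alpha,\mu$ are the constants of Theorem \ref{thm:backharnack}, then
\[
u(y,t_0)\le\bar\gamma\,u(\xi,t_0)\qquad\text{whenever }|y-\xi|\le r,
\]
with $\bar\gamma=\bar\gamma(n,p,q)$. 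Granting this, the left inequality of \eqref{eq:ellipticharnack} is the auxiliary bound at $\xi=x_0$, since $\frac{12}{\alpha}<\frac{13}{\alpha}$. For the right inequality, take $z_0\in\overline{B_r(x_0)}$ with $u(z_0,t_0)=\inf_{B_r(x_0)}u(\cdot,t_0)=:m_0$; if $m_0\ge u(x_0,t_0)$ there is nothing to prove, while if $m_0<u(x_0,t_0)$ the auxiliary bound applies at $\xi=z_0$ --- legitimately, because $|z_0-x_0|\le r$ gives $B_{\frac{12}{\alpha}r}(z_0)\subset B_{\frac{13}{\alpha}r}(x_0)$ and $m_0<u(x_0,t_0)$ gives $c\,m_0^{2-q}<\theta$ --- and evaluating at $y=x_0$ yields $u(x_0,t_0)\le\bar\gamma\,m_0$. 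The degenerate case $u(x_0,t_0)=0$ (which makes the statement trivial) is absorbed by the usual device of working with $u+\eps$ and letting $\eps\to 0$, as in Remark \ref{re:positivity}.

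To prove the auxiliary bound I would first apply the backward half of Theorem \ref{thm:backharnack} at $(\xi,t_0)$ with radius $2r$: setting $\bar t:=t_0-c\,u(\xi,t_0)^{2-q}(2r)^q$, it gives $\sup_{B_{2r}(\xi)}u(\cdot,\bar t)\le\mu\,u(\xi,t_0)$. I would then carry this $L^\infty$-bound forward in time by comparison. Let $v$ be the explicit supersolution of Lemma \ref{lem:supersolution} with $R=2r$, and put $\Phi(x,t):=\mu\,u(\xi,t_0)+v(x-\xi,\,t-\bar t)$. Since \eqref{eq:rgnppar} is autonomous and adding a constant preserves the supersolution property, $\Phi$ is a viscosity supersolution in $B_{2r}(\xi)\times(\bar t,\infty)$; on the parabolic boundary of $B_{2r}(\xi)\times(\bar t,t_0]$ we have $\Phi(\cdot,\bar t)=\mu\,u(\xi,t_0)\ge u(\cdot,\bar t)$ while $\Phi\to+\infty$ on the lateral part and $u$ is bounded there, so Theorem \ref{thm:comp} (after translating time so that the bottom sits at $0$) gives $u\le\Phi$ on $B_{2r}(\xi)\times(\bar t,t_0]$. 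Evaluating at $(y,t_0)$ with $|y-\xi|\le r$ and using $(2r)^{q/(q-1)}-|y-\xi|^{q/(q-1)}\ge(2^{q/(q-1)}-1)\,r^{q/(q-1)}$, the explicit profile collapses: because $\frac{1}{1-q}+\frac{q}{q-1}=1$, the powers of $r$ coming from $v$ cancel exactly against those in $(t_0-\bar t)^{1/(2-q)}=(c\,u(\xi,t_0)^{2-q}(2r)^q)^{1/(2-q)}$, leaving $v(y-\xi,t_0-\bar t)\le C(n,p,q)\,\lambda\,c^{1/(2-q)}\,u(\xi,t_0)$. Hence $u(y,t_0)\le(\mu+C(n,p,q)\,\lambda\,c^{1/(2-q)})u(\xi,t_0)$, which is the auxiliary bound with $\bar\gamma:=\mu+C(n,p,q)\,\lambda\,c^{1/(2-q)}$.

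The step I expect to be the crux is precisely this last collapse --- showing that the contribution of the comparison supersolution at the evaluation point is a \emph{fixed} multiple of $u(\xi,t_0)$, independent of $r$. This works only because the scaling of the equation, the homogeneity built into the profile of Lemma \ref{lem:supersolution}, and the intrinsic waiting-time relation $t_0-\bar t\sim u(\xi,t_0)^{2-q}r^q$ are mutually compatible, and it is what dictates the choices of the radius $2r$ and of $\bar t$. What remains is bookkeeping with the reference cylinder: the backward Harnack at radius $2r$ consumes $Q_{\frac{12}{\alpha}r}$, which, when the base point is the off-center $z_0\in\overline{B_r(x_0)}$, grows to $Q_{(\frac{12}{\alpha}+1)r}\subset Q_{\frac{13}{\alpha}r}$ (using $\alpha<1$) --- exactly the hypothesis of the Proposition --- and the constants $c$ and $\alpha$ are inherited unchanged from Theorem \ref{thm:backharnack}.
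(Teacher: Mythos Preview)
Your proposal is correct and follows essentially the same approach as the paper: apply the backward half of Theorem \ref{thm:backharnack} at radius $2r$ to control $u$ at time $\bar t$, compare with the translated supersolution of Lemma \ref{lem:supersolution} plus the constant $\mu u(\xi,t_0)$, and evaluate at $t_0$; then run this once at $x_0$ for the left inequality and once at a minimum point of $u(\cdot,t_0)$ for the right inequality. The only organizational difference is that you package the common step as an explicit ``auxiliary bound'' before applying it twice, whereas the paper writes out both applications in sequence; the cylinder bookkeeping ($\tfrac{12}{\alpha}+1<\tfrac{13}{\alpha}$ since $\alpha<1$, and $m_0\le u(x_0,t_0)$ giving $c\,m_0^{2-q}\le\theta$) and the scaling collapse match the paper's computation exactly.
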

	\begin{proof}\sloppy
		We can use parabolic Harnack (Theorem \ref{thm:backharnack}) for radius $2r$ to obtain constants ${\mu=\mu(n, p, q)}$ and ${c}={c}(n, p, q)$ such that
		\begin{equation}\label{eq:ellitic harnack 11}
		u(x,t_0-\theta (2r)^{q})\leq\sup_{B_{2r}(x_0)}u(\cdot,t_0-\theta (2r)^{q})\leq\mu u(x_0,t_0)
		\end{equation}
		for all $x\in B_{2r}(x_0)$, where $\theta={c}u\left(x_{0}, t_{0}\right)^{2-q}$. This is justified because $\frac{6}{\alpha}(2r)<\frac{13}{\alpha}r$.
		Let 
		\[
		v(x,t):=\lambda(t-t_{0}+\theta(2r)^{q})^{\frac{1}{2-q}}\left(\frac{1}{(2r)^{\frac{1}{1-q}}((2r)^{\frac{q}{q-1}}-\left|x-x_{0}\right|^{\frac{q}{q-1}})}\right)^{\frac{q}{2-q}}+\mu u(x_{0},t_{0}).
		\]
		Then by Lemma \ref{lem:supersolution}, $v$ is a viscosity supersolution in $B_{2r}(x_0)\times(t_{0}-\theta(2r)^{q},\infty)$
		that satisfies
		\[
		\begin{cases}
		v\geq\mu u(x_{0},t_{0}) & \text{on }B_{2r}(x_0)\times\left\{ t_{0}-\theta(2r)^{q}\right\} ,\\
		\lim\limits_{\Omega_{T} \ni(x,t)\to(y,s)}v(x,t)=\infty & \text{for all }(y,s)\in\partial B_{2r}(x_0)\times(t_{0}-\theta(2r)^{q},\infty)
		\end{cases}
		\]
		and we can use comparison principle Theorem \ref{thm:comp} to get
		\begin{equation}\label{eq:elliptic harnack 22}
		u\leq v \text{ in } (x_0,t_0)+Q_{2r}(\theta)
		\end{equation}
		because $u$ is bounded in $(x_0,t_0)+Q_{2r}(\theta)$ and on the bottom of the cylinder we have by \eqref{eq:ellitic harnack 11}
		\begin{equation*}
		u(x,t_0-\theta(2r)^{q})\leq\mu u(x_0,t_0)\leq v(x,t_0-\theta(2r)^{q}).
		\end{equation*}
		The estimate \eqref{eq:elliptic harnack 22} and the definition of $\theta$ imply in particular that
		\begin{align*}
		\label{eq:ellipticharnack1}
		\sup_{B_{r}(x_{0})}u(\cdot,t_{0})\leq\sup_{B_{r}(x_{0})}v(\cdot,t_{0}) & =\lambda\left(\theta (2r)^{q}\right)^{\frac{1}{2-q}}\left((2r)^{\frac{1}{1-q}}((2r)^{\frac{q}{q-1}}-r^{\frac{q}{q-1}})\right)^{\frac{q}{q-2}}+\mu u(x_{0},t_{0})\\
		& =\lambda(c u(x_{0},t_{0})^{2-q}2^{q}r^{q})^{\frac{1}{2-q}}\left(r2^{\frac{1}{1-q}}(2^{\frac{q}{q-1}}-1)\right)^{\frac{q}{q-2}}+\mu u(x_{0},t_{0})\\
		& =\left(\lambda c^{\frac{1}{2-q}}2^{\frac{q}{2-q}}\left(2^{\frac{1}{1-q}}(2^{\frac{q}{q-1}}-1)\right)^{\frac{q}{q-2}}+\mu\right)u(x_{0},t_{0})\\
		& =:\bar{\gamma}(n,p,q)u(x_{0},t_{0}).\numberthis
		\end{align*}
		Dividing by $\bar{\gamma}$ gives us the left side of \eqref{eq:ellipticharnack}. The constant $\bar{\gamma}$ blows up in the limit cases because $\lambda$ blows up when $q\to2$ for all $c$, and $\mu$ does the same when $q$ approaches the lower bound of \eqref{eq:range}. 
		
		Let $\hat{x}$ be a minimum point of $u(\cdot,t_{0})$ in $\overline{B}_{r}(x_{0})$. We will again use Theorem \ref{thm:backharnack} to obtain
		\begin{equation*}
		\sup_{B_{2r}(\hat{x})}u(\cdot,t_0-\hat{\theta}(2r)^{q})\leq\mu u(\hat{x},t_0)
		\end{equation*}
		where $\hat{\theta}={c}(u(\hat{x},t_{0}))^{2-q}$. The use of Harnack is justified because $\frac{6}{\alpha}(2r)+r<\frac{13}{\alpha}r$ because $\alpha\in(0,1)$. Let
		\[
		\hat{v}(x,t)=\lambda(t-t_{0}+\hat{\theta}(2r)^{q})^{\frac{1}{2-q}}\left(\frac{1}{(2r)^{\frac{1}{1-q}}((2r)^{\frac{q}{q-1}}-\left|x-\hat{x}\right|^{\frac{q}{q-1}})}\right)^{\frac{q}{2-q}}+\mu u(\hat{x},t_{0}).
		\]
		Then again by Lemma \ref{lem:supersolution}, $\hat{v}$ is a viscosity supersolution in $B_{2r}(\hat{x})\times(t_{0}-\hat{\theta}(2r)^{q},\infty)$
		that satisfies
		\[
		\begin{cases}
		\hat{v}\geq\mu u(\hat{x},t_{0}) & \text{on }B_{2r}(\hat{x})\times\{t_{0}-\hat{\theta}(2r)^{q}\},\\
		\lim\limits_{\Omega_{T} \ni(x,t)\to(y,s)}\hat{v}(x,t)=\infty & \text{for all }(y,s)\in\partial B_{2r}(\hat{x})\times(t_{0}-\hat{\theta}(2r)^{q},\infty)
		\end{cases}
		\]
		and we can use comparison principle Theorem \ref{thm:comp} to get
		\begin{equation*}
		u\leq \hat{v} \text{ in } (\hat{x},t_0)+Q_{2r}(\hat{\theta})
		\end{equation*}
		and thus
		\begin{align*}
		\label{eq:ellipticharnack2}
		u(x_{0},t_{0})\leq\sup_{B_{r}(\hat{x})}u(\cdot,t_{0})\leq\sup_{B_{r}(\hat{x})}\hat{v}(\cdot,t_{0}) & =\left(\lambda c^{\frac{1}{2-q}}2^{\frac{q}{2-q}}\left(2^{\frac{1}{1-q}}(2^{\frac{q}{q-1}}-1)\right)^{\frac{q}{q-2}}+\mu\right)u(\hat{x},t_{0})\\
		& =\bar{\gamma}(n,p,q)\inf_{B_{r}(x)}u(\cdot,t_{0}),\numberthis
		\end{align*}
		which is the right-hand side of \eqref{eq:ellipticharnack}.
		Combining \eqref{eq:ellipticharnack1} and \eqref{eq:ellipticharnack2} proves the theorem.
	\end{proof}
	
	Next we combine Proposition \ref{prop:elliptic harnack} with a covering argument to prove Theorem \ref{thm:ellipticharnack}. We first construct a suitable sequence of small balls along an arbitrary radial segment of our set. Then we show by induction that there is enough room around cylinders defined on these balls to use Proposition \ref{prop:elliptic harnack} to get an Harnack type estimate over any of these radial segments up arbitralily close to the boundary. Parabolic intrinsic Harnack chains for the $p$-parabolic equation have recently been examined in \cite{Avelin2019} in the degenerate case $p>2$.
	\begin{proof}[Proof of Theorem \ref{thm:ellipticharnack}] \sloppy
		By Proposition \ref{prop:elliptic harnack} there exist constants $\bar{\gamma}(n,p,q)$, $c^{\prime}(n,p,q)$ and ${\alpha(n,p,q)\in(0,1)}$ such that the elliptic Harnack's inequality
		\begin{equation}
		\bar{\gamma}^{-1}\sup_{B_{\tau}(z)}u(\cdot,t_{0})\leq u(z,t_{0})\leq\bar{\gamma}\inf_{B_{\tau}(z)}u(\cdot,t_{0})\label{eq:s elliptic harnack}
		\end{equation}
		holds whenever $B_{\frac{13}{\alpha}\tau}(z)\subset B_{1}$ and
		\begin{equation}
		t_{0}\pm\left(\frac{13}{\alpha}\tau\right)^{q}c^{\prime}u(z,t_{0})^{2-q}\in(-1,0].\label{eq:s second}
		\end{equation}
		
		Fix an arbitrary $\hat{y}\in\partial B_{r}(x_{0})$. Let $\rho:=r\alpha(\sigma-1)/13$.
		We define the points
		\[
		y_{k}:=x_{0}+k\rho\frac{\hat{y}-x_{0}}{\left|\hat{y}-x_{0}\right|}\in B_{r}(x_{0}),
		\]
		where $k=0,\ldots,K$ and $K\geq0$ is the smallest natural number
		such that $\hat{y}\in B_{\rho}(y_{K})$. Since $\hat{y}$ is on the
		boundary of $B_{r}(x_{0})$ and $\rho$ is a scaling of $r$, the
		number $K$ depends only on $\sigma$, $n$, $p$ and $q$. We will apply the elliptic
		Harnack's inequality in the balls $B_{\rho}(y_{k})$. Therefore we need
		the corresponding intrinsic cylinders to be contained within $Q_{1}^{-}(1)$.
		Since the choice of $\rho$ ensures that $B_{\frac{13}{\alpha}\rho}(y)\subset B_{\sigma r}(x_{0})\subset B_{1}$
		whenever $y\in B_{r}(x_{0})$, it remains to show that (\ref{eq:s second})
		holds for $\tau=\rho$ and $z=y_{k}$, $k=0,\ldots,K$. We choose
		\[
		c:=c^{\prime}\left(\frac{\sigma-1}{\sigma}\right)^{q}\bar{\gamma}^{K(2-q)}
		\]
		and proceed by induction to check that we have enough space in the time direction to use Proposition \ref{prop:elliptic harnack} for each of the cylinders $(y_k,t_0)+Q_{\rho}(c'u(y_k,t_0)^{2-q})$. Note that the assumption $(x_{0},t_{0})+Q_{\sigma r}(\theta)\subset Q_{1}^{-}(1)$
		implies
		\begin{equation}
		t_{0}\pm(\sigma r)^{q}cu(x_{0},t_{0})^{2-q}\in(-1,0].\label{eq:s first}
		\end{equation}
		
		\textbf{(Initial step)} Since $\bar{\gamma}\geq1$, we have
		\begin{align}
		\left(\frac{13}{\alpha}\rho\right)^{q}c^{\prime}u(y_{0},t_{0})^{2-q}=(r(\sigma-1))^{q}c^{\prime}u(x_{0},t_{0})^{2-q} & =(\sigma r)^{q}cu(x_{0},t_{0})^{2-q}\frac{c^{\prime}(\sigma-1)^{q}}{c\sigma^{q}}\nonumber \\
		& \leq(\sigma r)^{q}cu(x_{0},t_{0})^{2-q}.\label{eq:s 1}
		\end{align}
		It follows from (\ref{eq:s 1}) and (\ref{eq:s first}) that (\ref{eq:s second})
		holds with $z=y_{0}$ and $\tau=\rho$. Thus the elliptic Harnack
		inequality (\ref{eq:s elliptic harnack}) gives
		\[
		\bar{\gamma}^{-1}\sup_{B_{\rho}(y_{0})}u(\cdot,t_{0})\leq u(x_{0},t_{0})\leq\bar{\gamma}\inf_{B_{\rho}(y_{0})}u(\cdot,t_{0}).
		\]
		
		\textbf{(Induction step)} Suppose that $1\leq k\leq K$ and that we
		have
		\begin{equation}
		\bar{\gamma}^{-k}\sup_{B_{\rho}(y_{k-1})}u(\cdot,t_{0})\leq u(x_{0},t_{0})\leq\bar{\gamma}^{k}\inf_{B_{\rho}(y_{k-1})}u(\cdot,t_{0}).\label{eq:s i assumption}
		\end{equation}
		Since $y_{k}\in\overline{B}_{\rho}(y_{k-1})$, this implies in particular
		\[
		u(y_{k},t_0)\leq\bar{\gamma}^{k}u(x_{0},t_{0}).
		\]
		Therefore by definition of $\rho$ and $c$ we have
		\begin{align}
		\left(\frac{13}{\alpha}\rho\right)^{q}c^{\prime}u(y_{k},t_{0})^{2-q} & \leq(r(\sigma-1))^{q}c^{\prime}\bar{\gamma}^{k(2-q)}u(x_{0},t_{0})^{2-q}\nonumber \\
		& =(\sigma r)^{q}cu(x_{0},t_{0})^{2-q}\frac{c^{\prime}(\sigma-1)^{q}\bar{\gamma}^{k(2-q)}}{c\sigma^{q}}\nonumber \\
		& \leq(\sigma r)^{q}cu(x_{0},t_{0})^{2-q}.\label{eq:s i 1}
		\end{align}
		It follows from (\ref{eq:s i 1}) and (\ref{eq:s first}) that (\ref{eq:s second})
		holds for $z=y_{k}$ and $\tau=\rho$. Consequently by the elliptic
		Harnack's inequality (\ref{eq:s elliptic harnack}) we have
		\[
		\bar{\gamma}^{-1}\sup_{B_{\rho}(y_{k})}u(\cdot,t_{0})\leq u(y_{k},t_{0})\leq\bar{\gamma}\inf_{B_{\rho}(y_{k})}u(\cdot,t_{0}).
		\]
		Since $y_{k}\in\overline{B}_{\rho}(y_{k-1})$, combining the above
		display with (\ref{eq:s i assumption}) yields
		\[
		u(x_{0},t_{0})\geq\bar{\gamma}^{-k}\sup_{B_{\rho}(y_{k-1})}u(\cdot,t_{0})\geq\bar{\gamma}^{-k}u(y_{k},t_{0})\geq\bar{\gamma}^{-(k+1)}\sup_{B_{\rho}(y_{k})}u(\cdot,t_{0})
		\]
		and similarly
		\[
		u(x_{0},t_{0})\leq\bar{\gamma}^{k}\inf_{B_{\rho}(y_{k-1})}u(\cdot,t_{0})\leq\bar{\gamma}^{k}u(y_{k},t_{0})\leq\bar{\gamma}^{k+1}\inf_{B_{\rho}(y_{k})}u(\cdot,t_{0}).
		\]
		Thus
		\begin{equation}
		\bar{\gamma}^{-(k+1)}\sup_{B_{\rho}(y_{k})}u(\cdot,t_{0})\leq u(x_{0},t_{0})\leq\bar{\gamma}^{k+1}\inf_{B_{\rho}(y_{k})}u(\cdot,t_{0})\label{eq:s induction ineq}
		\end{equation}
		and the induction step is complete.
		
		By the induction principle, the estimate (\ref{eq:s induction ineq})
		holds for all $k=0,\ldots,K$. Since $\hat{y}\in B_{\rho}(y_{K})$,
		we have in particular
		\[
		\bar{\gamma}^{-(K+1)}\sup_{[x,\hat{y}]}u(\cdot,t_{0})\leq u(x_{0},t_{0})\leq\bar{\gamma}^{K+1}\inf_{[x,\hat{y}]}u(\cdot,t_{0}),
		\]
		where $[x,\hat{y}]$ denotes the segment from $x$ to $\hat{y}$.
		Since $\hat{y}\in\partial B_{r}(x_{0})$ was arbitrary, the estimate
		of the theorem follows for $\gamma:=\bar{\gamma}^{K+1}$. 
	\end{proof}
	

\end{document}